\documentclass{article}
\usepackage{graphicx,amssymb,amsthm,xcolor}      
\def\ind{{\mathchoice {\rm 1\mskip-4mu l} {\rm 1\mskip-4mu l}
{\rm 1\mskip-4.5mu l} {\rm 1\mskip-5mu l}}}

\newtheorem{example}{Example}
\newtheorem{prop}{Proposition}
\newtheorem{remark}{Remark}
\newtheorem{lem}{Lemma}
\begin{document}

\title{ Quantile-based Mean-Field Games\\   with Common Noise} 

\author{Hamidou Tembine  \thanks{This research work is supported by U.S. Air Force Office of Scientific Research under grant number  FA9550-17-1-0259.}
\thanks{The author is with Learning \& Game Theory Laboratory, New York University Abu Dhabi, E-mail:  tembine@nyu.edu}
}

\maketitle

\begin{abstract}                

In this paper, we explore the impact of quantiles on optimal strategies under state dynamics driven by  both individual noise, common noise and Poisson jumps. We first establish an optimality system satisfied the quantile process under jump terms.  We then turn to investigate a new class of finite horizon mean-field games with common noise in which the payoff functional and the state dynamics are dependent not only on the state-action pair but also on  conditional quantiles.  Based on the best-response of the decision-makers, it is shown that the equilibrium conditional quantile process  satisfies a stochastic partial differential equation  in the non-degenerate case. A closed-form expression of the quantile process is provided in a basic Ornstein-Uhlenbeck process with common noise. 
 \end{abstract}



\section{Introduction}

Recent research  has shown the importance of global uncertainty or common noise in mean-field games. This motivates the question of whether the risk associated to the uncertainty can be quantified. 
The quantile is one of the key quantity-of-interests and serves as a baseline for many performance metrics used in risk quantification.  We ask whether the existing tools from mean-field games can be modified to accommodate the quantiles in the payoff functionals and in the dynamics of the states.  We refer to such dynamic interaction as  quantile-based mean-field games. In such games the behavior of the decision maker depends on the state, jump, common noise and quantiles. As the decision affects  the state transitions and the quantile, the quantile in turn influences the individual state. It is  convenient to have the evolution of the quantile for computing optimal strategies of the decision-makers.

\subsubsection*{Related works on mean-field games with common noise }
Anonymous sequential and mean-field games with common noise can be considered as a natural generalization of the mean-field game problems  (see \cite{jova,jova2} and the references therein). 
The works in \cite{bergin1,bergin2}  considered mean-field games with common noise and 
obtained optimality system that determine mean-field equilibria conditioned of the information.
The work in \cite{ref6} provides sufficiency conditions for well-posedness of mean-field games with common noise. Existence of solutions of the resulting stochastic optimality systems are examined in \cite{ref4,ref2,ref3}. A probabilistic approach to the master equation is developed in \cite{ref5}.  In order to determine the optimal strategies of the decision-maker, the previous works used a maximum principle or a master equation which involves a Fokker-Planck equation (see \cite{ref8,ref9,ref7} and the references therein). Due to the inverse nature of the quantile, it is not straightforward to use the master equation proposed in earlier works \cite{ref8,ref9}. The results obtained in \cite{ref6,ref4,ref2,ref3,ref5,ref8,ref9,ref7} for well-posedness, existence and moment estimates do not directly apply to the quantile-based metrics, except under very strong assumptions. The methodology used here extends the work in \cite{kurtz} that is for uncontrolled systems without jump terms.

In this paper we address the question of finding a forward equation satisfied by 
the quantile of  measure-dependent stochastic differential equations with jump terms.  The main difficulty lies in expressing the propagation of the common noise to the distribution of states with jump terms. The presence of the global  (common) noise creates random mean-field quantities: the mean-field limit is a random measure, the conditional mean is random, the cumulative distribution as well as the quantiles are random processes.


\subsubsection*{Contribution }
The contribution of the present work  can be summarized as follows. 
\begin{itemize}\item We introduce a new class of mean-field games with common noise and L\'evy jumps
where the payoff functions and the state dynamics are described based upon conditional quantiles and jumps.
\item We examine  the conditional quantile process as a result of a mean-field  interaction in presence of individual and common noise.  We use It\^o's calculus and implicit function theorem to derive a simple formula for the quantile process.
\item The methodology is illustrated in a simple auction 
mechanism in which the state dynamics mimic a controlled Ornstein-Uhlenbeck process with a common noise. The common noise here captures the  market uncertainty. The decision-makers are prosumers (consumer-producer) who can submit their bids and the respective bids that are below the market price may be selected depending on the quantity needed by the operator to compensate the mismatch between supply and demand in peak hours. By a direct decomposition method, we compute the quantile process when the optimal strategies are employed. We provide a closed-form expression of the quantile process in presence of common noise and show that it satisfies the new dynamics 
established in Proposition 1 and 2.\end{itemize}

\subsubsection*{Organization }
The  rest of the paper is structured as follows.  Section \ref{sec:model} presents a generic game model based on quantiles and with a common noise. Section \ref{sec:wellposdeness} focuses on existence and well-posedness problems.  Section \ref{sec:main} presents the main technical results. In  Section \ref{sec:prosumer} we  provide a basic application to dynamic auction mechanism with common noise.
Section \ref{sec:conclusion} concludes the paper.

Table \ref{tab:TableOfNotationmftg} summarizes  the notations used in the paper.

\begin{table}[htbp]\caption{Table of Notations}
\centering 
\begin{tabular}{r c p{4cm} }
\hline
$\mathcal{I}$ &  $\triangleq$ &set of decision-makers\\
$T$ & $\triangleq$ & Length of the horizon\\
$[0,T]$ & $\triangleq$ & horizon of the mean-field game\\
$t$ & $\triangleq$ & time index.\\
$\mathcal{S}$ & $\triangleq$ & state space\\  
$s_i(t)$ & $\triangleq$ & state of $i$ at time $t$\\ 
$\Delta(\mathcal{S})$ & $\triangleq$ & set of probability measures on $\mathcal{S}$\\   
$m(t,.)$ &  $\triangleq$ &conditional probability measure 
\\ & & of the state at time $t$\\
$f$ &   & fraction in $(0,1)$\\
$m^f(t,.)$ &  $\triangleq$ &conditional quantile process of $s_i(t)$\\
$A$ & $\triangleq$ & action set of  decision-maker $i\in \mathcal{I}$\\  
$a_i(.)$ &  $\triangleq$ &strategy of decision-maker $i\in \mathcal{I}$\\
$\bar{b}(s_i,m^f,a_i)$ &  $\triangleq$ &drift coefficient function\\
$\sigma(s_i,m^f,a_i)$ &  $\triangleq$ &individual diffusion coefficient function\\
$\sigma_o(s_i,m^f,a_i)$ &  $\triangleq$ & global diffusion coefficient function\\
$\gamma(s_i,m^f,a_i,\theta)$ &  $\triangleq$ & individual jump rate coefficient function\\
$r(s_i,m^f,a_i)$ &  $\triangleq$ &instant payoff of decision-maker $i\in \mathcal{I}$\\
$g_i(s_i,m^f)$ &  $\triangleq$ &terminal payoff of decision-maker $i\in \mathcal{I}$\\
$\mathcal{R}_{i,T}(m_0,a_i, m^f)$ &  $\triangleq$ & cumulative payoff of $i$\\
$(p_i, q_i, q_o,\bar{r})$ &  $\triangleq$ & first-order adjoint process of $i\in \mathcal{I}$\\
$H_i$ &  $\triangleq$ &  $r+bp_i+\sigma q_i+{\sigma}_o q_o+\int_{\Theta}\gamma\bar{r}\mu(d\theta)$ of $i\in \mathcal{I}$\\
$H_{i,s}$ &  $\triangleq$ &  $r_{i,s}+\bar{b}_sp_i+\sigma_s q_i+{\sigma}_{o,s} q_o+\int_{\Theta}\gamma_s\bar{r}\mu(d\theta)$ \\ \hline
\end{tabular}
\label{tab:TableOfNotationmftg}
\end{table}

\section{Quantile-based mean-field game} \label{sec:model}
Consider the following mean-field game with common noise. The set of decision-makers  is $\mathcal{I}=\{1,2,\ldots \}.$
 Let $T>0,$  the interval $[0,T]$ is the horizon of the interaction. The state space is $\mathcal{S}=\mathbb{R}.$ The set of probability measures on $\mathcal{S}$ is denoted by $\Delta(\mathcal{S}).$
 For each decision-maker $i,$  a non-empty control action set $A$ is available.  
 An instant payoff of decision-maker $i$  is 
\begin{equation} \label{payoff}
\begin{array}{ll}
r(s_i,m^f,a_i):\ \ \mathcal{S}\times L^1([0,T]\times (0,1),\mathbb{R})\times {A} \rightarrow \mathbb{R},
\end{array}
\end{equation} where $s_i$ is the state of $i,$ $m^f$ is the quantile process,   $a_i$ is the control action of $i$ at time $t$ and $L^1([0,T]\times (0,1),\mathbb{R})$ is the set of integrable processes from $[0,T]\times (0,1)$ to $\mathbb{R}.$
 The dynamics of the state is explicitly given by an  It\^o's stochastic differential equation of Liouville type that is dependent on the quantile, and has a jump and  a common noise term.
\begin{equation} \label{stateone}
\begin{array}{ll}
s_i(t)= s_{i0}+\int_0^t \bar{b} dt'+\int_0^t \sigma dB_i(t')+\int_0^t \sigma_o dB_o(t')\\   \ \ \ +\int_0^t \int_{\Theta} \gamma \tilde{N}_i(dt',d\theta),
\end{array}
\end{equation}
with $s_i(0)=s_{i0}\in \mathcal{S},$ and 
\begin{equation} \label{coefficient}
\begin{array}{ll}
\bar{b},\sigma,\sigma_o(s_i,m^f,a_i):\
 \mathcal{S}\times L^1([0,T]\times (0,1),\mathbb{R})\times A \rightarrow \mathbb{R} ,
\end{array}
\end{equation}
where $\bar{b}$ is the 
drift coefficient functional, $\sigma$ is the diffusion coefficient functional related to the individual noise, $\sigma_o$ is the  diffusion coefficient functional related to the common noise of the game, $\gamma$ is the jump rate. $B_i, B_o$  are  independent  standard Brownian motions on a given probability space $(\Omega, \mathbb{F}, \{\mathcal{F}_t\},\mathbb{P}).$  $B_i, \tilde{N}_i$ represent the local uncertainties  of decision-maker $i$ and $B_o$ is a common noise or global uncertainty. $\mathcal{F}_t$ is the natural  filtration generated by $s_{i0},B_i, N_i, B_o.$ Denote by $\mathcal{F}_t^{B_o}$ the natural filtration generated by the common noise $B_o.$  A solution to the state equation will be denoted by $s_i(t):=s_i^{s_{i0},a_i,m^f}(t).$ The process
$m^f$ is the $f-$quantile associated to the random measure 
$$m(t,ds)=\mathcal{L}(s_{i}(t) \ | \  \mathcal{F}_t^{B_o,m^f})=\mathbb{P}_{s_{i}(t)\ | \ \mathcal{F}^{B_o,m^f}_t}$$ which is the conditional law of the state $s_i(t)$ given  $\mathcal{F}_t^{B_o,m^f}.$
 The quantile process is 
$$m^f(t)=Q_{s_i(t)}(t,f)\,=\,\inf \left\{p\in \mathbb{R}\ :f\leq F(t,p)\right\},$$ where $F(t,p)=m(t,(-\infty, p])$ is the conditional cumulative distribution, this solves $F(t,m^f(t))=f$ even when $F$ has some flat regions or discontinuity.
${N}_i$ is a Poisson
random measure with L\'evy measure $\mu (d\theta),$  independent of $W_i$ and the measure $\mu$ is a $\sigma-$finite measure over $\Theta.$   
$\tilde{N}_i(dt,d\theta)=N_i(dt,d\theta)-\mu(d\theta)dt.$
The jump rate functional  is $${\gamma}: \   \mathcal{S}\times L^1([0,T]\times (0,1),\mathbb{R})\times A \times \Theta \rightarrow \mathbb{R}.$$

 One of the key questions addressed in this paper is to find the dynamics of  the quantile process $m^f.$  Note that, the function $\sigma_o(s_i,m^f,a_i)$ depends on the state, quantile and  the control action. This helps to capture some scenarios where the common noise influences each of  the decision-makers, but  the way it is perceived may be different from one   decision-maker to another.

The quantile mean-field game is played as follows. Given a initial state $s_{i0}$ which is drawn from the initial distribution $m_0,$ the game $\mathcal{G}(s_{i0})$ proceeds as follows. At each instant, each
 decision-maker observes the state (perfect monitoring, perfect state observation), chooses a control action according her strategy 
 (defined below) and observes/measures her payoff.

An admissible control  of decision-maker $i\in \mathcal{I}$ is progressively measurable and integrable process with respect to the filtration $\mathcal{F}_t:=\mathcal{F}_t^{s_0,B_o,B,m^f},$ taking values in $A.$ We denote the set of all admissible control strategy of decision-maker $i$ by $\mathcal{A}:$
$$
\mathcal{A} = \{a_i \in L^1_{\mathbb{F}}(\Omega\times [0,T],\mathbb{R}); a_i(.,t)\in {A}, \ a.e.\  t\in [0,T] \}
$$
where $ L^1_{\mathbb{F}}(\Omega\times [0,T],\mathbb{R})$ is the set of all  progressively $\mathbb{F}-$measurable and integrable $\mathbb{R}-$valued processes defined on $[0,T].$ 
 We identify two processes $a_i$ and $\tilde{a}_i$ in $\mathcal{A}$ if $$ \mathbb{P}(a_i=\tilde{a}_i,\ \ a.e.\  \ on \ [0,T])=1.$$
  A quantile-based strategy for decision-maker $i$ starting at time $0$ is a Borel-measurable map (renamed again by) 
 $$a_i:\  [0, T]\times C([0,T], \mathcal{S})\times  L^1([0,T]\times (0,1),\mathbb{R}) \rightarrow A$$  for which there  exists $\epsilon>0$ such that 
 for any $(t, f_1, f_2)\in (0, T]\times \{   C([0,T], \mathcal{S})\times  L^1([0,T]\times (0,1),\mathbb{R})                  
 \}^2,$ if $f_1=f_2$ on $[0,t]$ then $a_i(., f_1)=a_i(., f_2)$ on $[0, t+\epsilon].$
 To each strategy profile one can associate a control process profile. 
 This will allow us to work with both open-loop and feedback form of strategies by considering $a_i(., s^{s_{i0},a_i}_i, m^f).$
 The cumulative payoff of decision-maker $i$ is $$\mathcal{R}_{i,T}(s_{i0},a_i,m^f):=$$ $$ g(s_i(T),m^f(T))+\int_0^T r(s_i(t),m^f(t),a_i(t))\ dt,$$
 where $g_i(s_i(T),m^f(T)) $ is the terminal payoff of $i.$ The risk-neutral payoff of $i$ is $\mathbb{E}[\mathcal{R}_{i,T}(m_0,a_i,m^f)].$
 The risk-neutral quantile-based mean-field game $\mathcal{G}_{0,T,f}(s_{i0})$ is the normal-form game $\mathcal{G}_{0,T,f}(s_{i0})=(\mathcal{I}, (\mathcal{A},\mathbb{E}\mathcal{R}_{i,T})_{i\in \mathcal{I}} ).$

\subsubsection*{Best response to quantile process}
Each decision-maker $i$ revises her strategy and responds to the stochastic  quantile process $m^f$ and chooses her best response strategy $a_i$ by solving
\begin{equation} \label{br}
\left\{\begin{array}{lll}
\sup_{a_i\in \mathcal{A}} \mathbb{E}[\mathcal{R}_{i,T}(s_{i0},a_i,m^f) ],\\ 
s_i(t):=s_i^{s_{i0},a_i,m^f}(t)\ \mbox{solves}\  (\ref{stateone}),\\
s_i(0)=s_{i0}\sim m_0.
\end{array}
\right.
\end{equation}
A strategy $a_i^*$ such that $\mathbb{E}[\mathcal{R}_{i,T}(s_{i0},a^*_i,m^f) ]=\sup_{a_i\in \mathcal{A}} \mathbb{E}[\mathcal{R}_{i,T}(s_{i0},a_i,m^f) ],$ is called a best response to the quantile process $m^f.$
The set of best response strategies of decision-maker $i$  is denoted by $BR(m^f).$
\subsubsection*{Quantile-based Mean-Field Equilibrium}
Define the following equilibrium point problem:
Find $(a_i^*, m^f)\in \mathcal{A}\times L^1, \ i\in \mathcal{I}$ such that for every decision-maker $i$ one has $a_i^*\in BR(m^f)$ and $m^f$ coincides with $
m^f_*(t)=\,\inf \left\{p\in \mathbb{R}\ :\ m^*(t,(-\infty, p])\geq f\right\},$ where $$m^*(t,ds)=\mathbb{P}_{s^*_{1}(t)\ | \ \mathcal{F}^{B_o,m^f_*}_t}.$$

The difference between the best response problem and the equilibrium problem is that, in the former the conditional quantile process $m^f$ is given, the latter can be seen as a fixed-point as the conditional quantile is the quantile conditioned on the distribution and the common noise, which in turn, is obtained from the states of the individuals.

\section{Well-posedness} \label{sec:wellposdeness}
Well-posedness of the state equation: The state dynamics of decision-maker $i$ is the following controlled Liouville system:

\begin{equation}  \label{ramp2}
\left\{\begin{array}{lll}
ds_i= \bar{b}(s_i, m^f,a_i) dt+ \sigma(s_i, m^f,a_i) dB_i(t)\\ \ \ \quad \quad  +\sigma_0(s_i, m^f,a_i) dB_o(t)+\int_{\Theta}\gamma \tilde{N}_i(dt,d\theta),\\
s_i(0)=s_{i0}\sim m_0,\\
m^f(t)=\,\inf \left\{p\in \mathbb{R}\ :\ m(t,(-\infty, p])\geq f\right\},\\
m(t,ds)=\mathbb{P}_{s^*_{1}(t)\ | \ \mathcal{F}^{B_o,m^f}_t},

\end{array}
\right.
\end{equation}
where $f\in (0,1),$ $m_0$ is the initial measure. 
The probability law of $s_i(t)$ is $\mathbb{E}[\phi(s_i) |  \mathcal{F}_t^{B_o,m} ]=\mathbb{E}[ \phi(s_1(t))\ | \ \mathcal{F}_t^{B_o,m}  ].$

In order to provide a sufficiency condition for existence of  solution to (\ref{ramp2}) we impose the following assumption:

H0: the coefficient functions $\bar{b},\sigma_o,\sigma$ are continuous functions, uniformly Lipschitz in the first component $s,$ twice continuously differentiable in $s_i,$  $\sigma$ non-singular, $s_i(0)$ is $L^1$-integrable and indistinguishable (invariance in distribution per permutation of index).
\begin{lem}[Existence]
Under H0, there exists a positive function of time, $\beta,$ such that $$\sup_{t'\in [0,t] } \mathbb{E} [|s_i(t') |+|m^f(t')|]  < \beta(f, t).$$   There exists at least one  solution to (\ref{ramp2}).
\end{lem}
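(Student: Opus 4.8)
The plan is to establish the two parts of the lemma in sequence: first an a priori moment bound, assuming a solution exists, then existence by a fixed-point/compactness argument for which the moment bound is the crucial ingredient. I would begin by fixing the conditional quantile process $m^f$ as an exogenous $L^1([0,T]\times(0,1),\mathbb{R})$ datum and recalling that, under H0, the map $a_i \mapsto s_i^{s_{i0},a_i,m^f}$ is well-defined by the standard existence-uniqueness theory for SDEs with Lipschitz coefficients and compensated Poisson jumps (here the Lipschitz-in-$s$ hypothesis, non-singularity of $\sigma$, and $L^1$-integrability of $s_i(0)$ are exactly what is needed). Apply It\^o's formula to $|s_i(t)|$ (or, to stay smooth, to $(1+|s_i(t)|^2)^{1/2}$), take expectations so the $dB_i$, $dB_o$ and compensated $\tilde N_i$ martingale terms vanish, and use the uniform Lipschitz bound $|\bar b(s,m^f,a)| \le C(1+|s| + \|m^f\|)$ together with the analogous bounds on $\sigma,\sigma_o,\gamma$ (these follow from continuity and Lipschitzness after absorbing the value at a fixed reference point). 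This yields an integral inequality of the form $\mathbb{E}[\sup_{t'\le t}|s_i(t')|] \le C_1(1 + \int_0^t \mathbb{E}[\sup_{r\le t'}|s_i(r)|]\,dt' + \int_0^t |m^f(t')|\,dt')$, and Gr\"onwall's inequality closes it, producing a bound depending on $f$, $t$, the constants, and $\|m^f\|_{L^1}$.

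Next I would bound $|m^f(t')|$ itself. Since $m^f(t')$ is a quantile of the conditional law $m(t',\cdot)$, the elementary tail estimate $f\wedge(1-f)\cdot |m^f(t')| \le \mathbb{E}[\,|s_i(t')|\ \big|\ \mathcal{F}^{B_o,m^f}_{t'}\,]$ holds (a quantile at level $f$ cannot exceed the conditional mean divided by $\min(f,1-f)$, by Markov's inequality applied to the conditional measure); taking expectations gives $\mathbb{E}[|m^f(t')|] \le (f\wedge(1-f))^{-1}\,\mathbb{E}[|s_i(t')|]$. Substituting this back into the Gr\"onwall estimate and re-running the argument — now the two unknowns $\mathbb{E}[\sup|s_i|]$ and $\mathbb{E}[\sup|m^f|]$ are coupled linearly — produces the desired $\beta(f,t)$, which blows up as $f\to 0$ or $f\to 1$, consistent with the statement's dependence on $f$. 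The invariance-in-distribution-per-permutation assumption ensures $m(t,\cdot)$ is genuinely the conditional law of a representative player, so these bounds are index-independent.

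For the existence half I would set up a fixed-point map $\Phi$ on a suitable closed, convex, bounded subset of $L^1([0,T]\times(0,1),\mathbb{R})$: given $m^f$, solve the best-response control problem (\ref{br}) — or, if one only wants existence of the coupled system and not equilibrium optimality, simply fix any admissible $a_i$ — then form $s_i^{s_{i0},a_i,m^f}$, compute its $\mathcal{F}^{B_o,\cdot}_t$-conditional law $m$, extract the $f$-quantile, and call that $\Phi(m^f)$. The a priori bound $\beta(f,t)$ shows $\Phi$ maps the ball of radius $\beta$ into itself. Continuity/compactness of $\Phi$ follows from stability of SDE solutions under $L^1$-perturbation of the coefficient input (Lipschitz dependence), weak continuity of the conditional-law map, and continuity of the quantile functional at levels where $F(t,\cdot)$ is strictly increasing — this last point is delicate because of the possible flat regions of $F$, but the non-singularity of $\sigma$ (non-degenerate individual noise) forces $m(t,\cdot)$ to have a density for $t>0$, hence $F(t,\cdot)$ is strictly increasing and $m^f(t)=F(t,\cdot)^{-1}(f)$ is continuous. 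Then Schauder's (or Schauder–Tychonoff's) fixed-point theorem delivers a solution. I expect the main obstacle to be precisely this compactness/continuity step for $\Phi$: one must control the conditional law $m(t,\cdot)$ jointly in $t$ and in the common-noise realization, handle the jump term's contribution to the law, and argue that the quantile map does not destroy the compactness gained from the moment bound — tightness in the Skorokhod space plus the non-degeneracy-induced regularity of $F$ is the route I would take, but making the conditional-expectation manipulations rigorous (the conditioning $\sigma$-algebra $\mathcal{F}^{B_o,m^f}_t$ itself depends on the unknown) is where the real work lies.
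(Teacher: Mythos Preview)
The paper does not actually prove this lemma: it states only that ``the proof is by now standard using linear growth recursion, and is therefore omitted.'' Your proposal is a reasonable and considerably more detailed elaboration of what such a standard argument would look like; the Gr\"onwall-based a priori moment estimate together with the Markov-inequality bound $(f\wedge(1-f))\,|m^f(t')|\le \mathbb{E}[\,|s_i(t')|\mid \mathcal{F}^{B_o,m^f}_{t'}]$ is exactly the right coupling to produce the $f$-dependent constant $\beta(f,t)$.

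The one methodological difference worth flagging is that the phrase ``linear growth recursion'' most naturally points to a Picard-type iteration (define $s^{(k+1)}$ from $s^{(k)}$ and the quantile of the $k$-th law, control moments uniformly in $k$, pass to a limit), whereas you opt for a Schauder fixed-point on the quantile input $m^f$. Both routes are standard and rely on the same a priori bound; the Picard scheme is slightly more constructive and sidesteps the delicate compactness step you correctly identify as the main obstacle in the Schauder approach, while your route isolates more cleanly where the self-referential conditioning $\mathcal{F}^{B_o,m^f}_t$ causes difficulty. One small caveat: your linear-growth bound $|\bar b(s,m^f,a)|\le C(1+|s|+\|m^f\|)$ does not follow from H0 as stated, since H0 gives Lipschitz continuity only in the first argument $s$; you are implicitly assuming at-most-linear growth in the $m^f$ argument as well, which the paper also tacitly assumes but never writes down.
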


The proof is by now standard using linear growth recursion, and is therefore omitted. 
 Uniqueness to (\ref{ramp2}) is an open issue. This is because we do not have an estimate bound of the quantile: we are lacking an estimate bound for the quantity $|\bar{b}(s_1,m^f_1, a_i)-\bar{b}(s_2,m^f_2,a_i)|$ as it involves an inverse term.

Stochastic Fokker-Planck Equation: It is important to notice that the conditional law $s^*_{i}(t)\ | \ \mathcal{F}^{B_o,m^f}_t$ is not a deterministic distribution. The random distribution $m(t,ds)$ solves a stochastic Fokker-Planck-Kolmogorov equation of mean-field type.  By It\^o's formula, 
$m(t,s)$ solves the following stochastic partial differential equation (SPDE)  in a weak sense:
For any bounded Borel-measurable function  (test function) $\phi,$
\begin{equation} \label{sfpk}
\left\{\begin{array}{lll}
\int_{s\in \mathbb{R}} \phi(s)m(t,ds)=\int_{s\in \mathbb{R}} \phi(s)m(0,ds)\\
+\int_0^t \int_{s\in \mathbb{R}} \{[\frac{\sigma^2+\sigma^2_o}{2} ]\phi_{ss}+\bar{b}\phi_s \}m(dt',ds)\\
+\int_0^t \int_{s\in \mathbb{R}} \sigma_o \phi_s m(t',ds) dB_o(t')\\ 
+\int_0^t \langle J[\phi],m\rangle dt',
\end{array}
\right.
\end{equation}
where 
$$J[\phi]:=\int_{\Theta} \{ \phi(s(t_{-})+\gamma(t_{-},s(t_{-}),\theta))-\phi(s(t_{-}))-\phi_s.\gamma(t_{-},s(t_{-}),\theta) \ind_{\{ |s| < \epsilon\}} \}\mu(d\theta),$$
and the adjoint $J^*$ of the operator $J$ is given by
$$\langle J[\phi],m\rangle =\langle \phi,J^*[m]\rangle.$$
 \begin{lem}[Conditional density]
The conditional density is stochastic and solves the stochastic integro-PDE given by

\begin{equation} \label{sfpkt1t}
\begin{array}{lll}
dm=\{-(\bar{b}m)_s+ \frac{1}{2}(\sigma^2m)_{ss} +\frac{1}{2}\sigma^2_{o}m_{ss}+J^*[m]\}dt\\     \  \  - \sigma_o m_s  dB_o,
\end{array}
\end{equation}
\end{lem}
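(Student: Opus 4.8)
The plan is to derive (\ref{sfpkt1t}) from the weak stochastic Fokker--Planck--Kolmogorov equation (\ref{sfpk}) by a duality (integration-by-parts) argument, once the random conditional measure $m(t,ds)$ is known to admit a density $m(t,s)\,ds$ with enough regularity and decay in $s$.

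\textbf{Step 1 (conditional density).} I would first show that for $t>0$ the conditional law $\mathbb{P}_{s_i(t)\mid\mathcal{F}_t^{B_o,m^f}}$ is absolutely continuous with respect to Lebesgue measure, with a density $m(t,\cdot)$ smooth enough in $s$ to permit the integrations by parts below and such that it, together with its first two $s$-derivatives, vanishes at $\pm\infty$. This rests on H0: since $\sigma(s_i,m^f,a_i)$ is non-singular, the dynamics (\ref{ramp2}) are uniformly elliptic in the individual noise $B_i$, while the common-noise term $\sigma_o\,dB_o$ and the compensated jump term $\int_\Theta\gamma\,\tilde N_i(dt,d\theta)$ are lower-order perturbations that do not destroy ellipticity. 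Running a conditional Malliavin-calculus (or parametrix) argument path-by-path along the common noise, with $m^f$ frozen as an exogenous $L^1$ input exactly as in the best-response problem (\ref{br}), yields the density and the Gaussian-type tail decay, the $L^1$-bound of Lemma~1 controlling the tails. I expect this to be the main obstacle: the filtration $\mathcal{F}_t^{B_o,m^f}$ with respect to which we condition is itself partly generated by $m^f$, so the smoothing argument only delivers regularity commensurate with that of the coefficients evaluated along $(m^f(t))_t$, and no better — which is also why uniqueness is left open after Lemma~1.

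\textbf{Step 2 (adjoints).} Writing (\ref{sfpk}) in the compact form $d\langle\phi,m\rangle=\langle\mathcal{L}\phi,m\rangle\,dt+\langle\mathcal{L}_o\phi,m\rangle\,dB_o+\langle J[\phi],m\rangle\,dt$ with $\mathcal{L}\phi=\frac{\sigma^2+\sigma_o^2}{2}\phi_{ss}+\bar b\,\phi_s$ and $\mathcal{L}_o\phi=\sigma_o\,\phi_s$, I would substitute the density of Step~1 and integrate by parts in $s$; the boundary terms vanish by the decay just established, giving $\langle\mathcal{L}\phi,m\rangle=\langle\phi,\mathcal{L}^*[m]\rangle$ with $\mathcal{L}^*[m]=\frac12(\sigma^2m)_{ss}+\frac12(\sigma_o^2m)_{ss}-(\bar bm)_s$, and $\langle\mathcal{L}_o\phi,m\rangle=\langle\phi,\mathcal{L}_o^*[m]\rangle$ with $\mathcal{L}_o^*[m]=-(\sigma_om)_s$, while the jump term passes through unchanged via the definition $\langle J[\phi],m\rangle=\langle\phi,J^*[m]\rangle$ already recorded before the statement. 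When $\sigma_o$ is state-independent, the situation in the Ornstein--Uhlenbeck application of Section~\ref{sec:prosumer}, one has $\frac12(\sigma_o^2m)_{ss}=\frac12\sigma_o^2m_{ss}$ and $(\sigma_om)_s=\sigma_om_s$, which is the form displayed in (\ref{sfpkt1t}); note that the $\frac12\sigma_o^2m_{ss}$ term survives conditioning on $\mathcal{F}_t^{B_o,m^f}$ because the quadratic variation of the common noise is not averaged out, and it coexists with the stochastic transport term $-\sigma_om_s\,dB_o$.

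\textbf{Step 3 (conclusion).} Collecting Step~2, $d\langle\phi,m\rangle=\langle\phi,\mathcal{L}^*[m]+J^*[m]\rangle\,dt+\langle\phi,\mathcal{L}_o^*[m]\rangle\,dB_o$ for every bounded Borel test function $\phi$; since that class is separating, the identity holds for the density itself in the weak-in-$s$, strong-in-$t$ sense, i.e.\ (\ref{sfpkt1t}). The joint measurability and integrability of $t\mapsto\mathcal{L}_o^*[m]$ required for the $dB_o$ integral to be well defined, and likewise for the drift terms, follow from Step~1 together with Lemma~1.
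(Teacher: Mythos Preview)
Your approach is correct but follows a genuinely different route from the paper. The paper does not dualize the weak form (\ref{sfpk}); instead it decomposes the state as $s_i(t)=s_{i,nc}(t)+s_{i,c}(t)$, with $s_{i,c}(t)=s_0+\int_0^t\sigma_o\,dB_o$ carrying the common noise and $s_{i,nc}$ the individual drift, diffusion and jumps. Conditionally on $\mathcal{F}_t^{B_o}$ the density then has the translation form $m(t,s)=m_{nc}(t,s-s_{i,c}(t))$, and applying It\^o's formula in the variable $s_{i,c}(t)$ directly produces the correction $\tfrac12\sigma_o^2 m_{ss}\,dt$ and the transport term $-\sigma_o m_s\,dB_o$; the ordinary (deterministic) Fokker--Planck equation for $m_{nc}$ supplies the remaining drift $-(\bar b m)_s+\tfrac12(\sigma^2 m)_{ss}+J^*[m]$. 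Your integration-by-parts argument is the standard SPDE adjoint route and is fine, but it naturally yields $\tfrac12((\sigma^2+\sigma_o^2)m)_{ss}$ and $-(\sigma_o m)_s$, which you then have to specialize to state-independent $\sigma_o$ to recover the exact form (\ref{sfpkt1t}); the paper's decomposition makes that specialization structural (the translation ansatz $m=m_{nc}(\,\cdot\,-s_{i,c})$ collapses when $\sigma_o$ depends on $s_i$) and, as a bonus, furnishes the explicit representation $m(t,s)=m_{nc}(t,s-s_0-\int_0^t\sigma_o\,dB_o)$ that is reused in Examples~1--2 and in Section~\ref{sec:prosumer}. Your route, on the other hand, requires no such additive splitting of the dynamics and would extend more readily to genuinely state-dependent $\sigma_o$, at the price of the conditional-regularity step you flag in Step~1.
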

\begin{proof}
To establish  the stochastic integro-Fokker-Planck equation, we use the decomposition $s_i(t)=s_{i,nc}(t)+s_{i,c}(t)$ as common part and individual noises part with
$$
s_{i,nc}(t)= \int_0^t \bar{b}dt'+ \int_0^t {\sigma}dB_i(t')+\int_0^t \int_{\Theta} \gamma d\tilde{N}_i(dt',d\theta),
$$

$$
s_{i,c}(t)= s_0 +\int_0^t {\sigma}_o dB_o(t').
$$
The probability law of $s_i(t)\  | \mathcal{F}_t^{B_o,s_0}$ can be expressed in terms of probability of $s_{i,nc}(t):$
$$m_{nc}(s_{nc})=m_{nc}(s-s_c)=m_{nc}(s-s_0 -\int_0^t {\sigma}_o dB_o(t'))$$
We apply It\^o's formula to the latter expression to derive the following:
$$dm=[m_{nc,t}+\frac{ \sigma_o^2}{2} m_{ss}]dt- \sigma_o m_{s} dB_o(t).$$
 The forward equation for the individual noises part $m_{nc}$ solves $$m_{nc,t}=-(\bar{b}m_{nc})_s+ \frac{1}{2}(\sigma^2m_{nc})_{ss} +J^*[m_{nc}].$$
Hence the announced result.
\end{proof}
Solving McKean-Vlasov type SPDE is not a trivial task. Next, we provide a simple approximation of the distribution using virtual agents.
Let the strategy $a_i$ be a progressively measurable and functional of $s_i, m^f.$  Then, the coefficient functions can be rewritten as $$\bar{b}(s_i, m^f,a_i)=\tilde{b}(s_i, m^f) , \sigma(s_i, m^f,a_i)=\tilde{\sigma}(s_i, m^f),  $$  $$\sigma_o(s_i, m^f,a_i)=\tilde{\sigma}_o(s_i, m^f),$$   $$\gamma(s_i, m^f,a_i,\theta)=\tilde{\sigma}_o(s_i, m^f,\theta).$$
The random measure $m$ can be approximated  by an empirical measure process 
$$m(t,ds)=\lim_{n\rightarrow +\infty}\ \ m^n(t,ds)=\lim_{n\rightarrow +\infty}\ \frac{1}{n}\sum_{i\in \mathcal{I}}\delta_{s_{i}(t)}$$

\begin{equation}
\left\{\begin{array}{lll}
ds^n_i=s^n_i(0)+\int_0^t \tilde{b}(s_i^n, m^{f,n}) dt'\\  + \int_0^t  \tilde{\sigma}(s_i^n, m^{f,n}) dB_i(t') +\int_0^t  \tilde{\sigma}_o(s^n_i, m^{f,n}) dB_o(t')\\ +\int_0^t \int_{\Theta}\tilde{\gamma}(s^n_i, m^{f,n},\theta) \tilde{N}_i(dt',d\theta),\\
s^n_i(0)=s^n_{i0}\sim m^n_0,\\
m^{f,n}(t)=\,\inf \left\{s\in \mathbb{R}\ :\ m^n(t,(-\infty, s])\geq f\right\},\\
 m^n(t,.)= \frac{1}{n}\sum_{i\in \mathcal{I}}\delta_{s_{i}(t)},\\
i\in \mathcal{I}.
\end{array}
\right.
\end{equation}


We use an Euler-Maruyama method over the discrete time space $\frac{1}{n}\mathbb{Z} \cap [0,T].$ By de Finetti-Hewitt-Savage's theorem \cite{aldous,sznitman}, it is well-known that there is a random $m$ such that $m^n$ converges in distribution to $m.$ Moreover, conditioning on $m,$ the weak convergence of 
$s^n_i$ is obtained by the convergence of the empirical $m^{f,n}(t).$ These solutions are extended step by step from interval  $[\frac{k}{n},\frac{k+1}{n}]$ to $[\frac{k+1}{n},\frac{k+2}{n}].$
Furthermore, we know that the limiting random measure solves the stochastic Fokker-Planck equation (\ref{sfpk}), it remains to identify an equation satisfied by the quantile process. To do so, we again impose a certain regularity  of coefficient functions to get a positive measure that is continuous with respect to the Lebesgue and that does not vanish on open set.

\section{Main Results} \label{sec:main} 
This section presents our  technical main result.
\subsubsection*{Quantile Dynamics with Jumps}
The next result presents the key quantile dynamics and its relationship with $m$ and  $m_s.$ 
\begin{prop} \label{propq}
Under H0, the quantile process  $m^f$ solves the following stochastic   differential equation:
\begin{equation} \label{ramp2}
\left\{\begin{array}{lll}
m^f(t)=m^f(0_+)+ \int_{0_+}^t  \tilde{b}(m^f,m^f)dt'\\ +\int_{0_+}^t \tilde{\sigma}_o(m^f,m^f)dB_o(t')
-\int_{0_+}^t \tilde{\sigma} \tilde{\sigma}_s(m^f,m^f)dt'\\ -\frac{1}{2}\int_{0_+}^t \tilde{\sigma}^2(m^f,m^f)\frac{m_s(t',m^f)}{m(t',m^f)}dt'\\
-\int_{0_+}^t\int_{\Theta} \tilde{\gamma}(m^f,\theta)\tilde{\gamma}_s(m^f,\theta)\mu(d\theta) dt'\\ -  \frac{1}{2}\int_{0_+}^t \frac{m_s(t',m^f)}{m(t',m^f)}\int_{\Theta}\tilde{\gamma}^2\mu(d\theta) dt'.
\end{array}
\right.
\end{equation}
\end{prop}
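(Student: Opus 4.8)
The plan is to differentiate the defining relation of the quantile,
\[
F(t,m^f(t))=f ,\qquad F(t,p):=m\bigl(t,(-\infty,p]\bigr),
\]
by a generalized It\^o--Wentzell change of variables. In the non-degenerate case (under H0, with $\sigma$ non-singular) the conditional density $m(t,\cdot)$ exists, is $C^2$ in the space variable and strictly positive by parabolic smoothing, which is exactly what makes both the implicit function theorem and the change of variables below legitimate. First I would integrate the stochastic integro-Fokker--Planck equation of Lemma~2 over $(-\infty,p]$; since $F_p=m$ and $F_{pp}=m_s$, this yields an SPDE for the conditional cumulative distribution of the form $dF(t,p)=A(t,p)\,dt+C(t,p)\,dB_o$ with $C(t,p)=-\tilde\sigma_o\,m(t,p)$ (integrating the $-\tilde\sigma_o m_s$ term of Lemma~2) and
\[
A(t,p)=-\tilde b\,m+\tilde\sigma\tilde\sigma_s\,m+\frac{1}{2}\tilde\sigma^2 m_s+\frac{1}{2}\tilde\sigma_o^2 m_s+\mathcal J(t,p),\qquad \mathcal J(t,p):=\int_{-\infty}^{p}J^*[m]\,ds ,
\]
the coefficients $\tilde b,\tilde\sigma$ being evaluated at state argument $p$ and quantile argument $m^f(t)$, and $m,m_s$ at $(t,p)$. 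Because the Poisson random measure $N_i$ is independent of $B_o$, the conditional law $m(t,\cdot)$ — hence $F(t,\cdot)$ and $m^f(t)=F(t,\cdot)^{-1}(f)$ — evolves continuously in time (individual jumps are averaged out under the conditional expectation), so no jump martingale survives and the It\^o--Wentzell formula is used in its continuous form.

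Next I would handle the quantile itself. Since $F$ is $C^2$ in $p$ with $F_p=m>0$ along $p=m^f(t)$, the implicit function theorem applied to the random field $F$ forces $m^f$ to be an It\^o process $dm^f=\mu^f\,dt+\sigma^f\,dB_o$, whose coefficients are read off by expanding the identity $0=dF(t,m^f(t))$:
\[
0=\Bigl[A+m\,\mu^f+\frac{1}{2}m_s(\sigma^f)^2+C_p\,\sigma^f\Bigr]dt+\bigl[C+m\,\sigma^f\bigr]dB_o ,
\]
every term taken at $(t,m^f(t))$. Matching the $dB_o$ part gives $\sigma^f=-C/m=\tilde\sigma_o(m^f,m^f)$ (which can also be seen directly from the rigid-shift decomposition of Lemma~2), whence $d\langle m^f\rangle_t=\tilde\sigma_o^2\,dt$ and the It\^o--Wentzell cross term $C_p\sigma^f=-\tilde\sigma_o^2 m_s$. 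Matching the $dt$ part and dividing by $m>0$, the $\tilde\sigma_o$-contributions coming from $A$, from the quadratic-variation term $\frac{1}{2}m_s(\sigma^f)^2$, and from the cross term $C_p\sigma^f$ cancel identically — the common noise enters the quantile as a pure shift with no It\^o correction — while $-A/m$ leaves $\tilde b(m^f,m^f)$ together with $-\tilde\sigma\tilde\sigma_s-\frac{1}{2}\tilde\sigma^2 m_s/m$ from $-\frac{1}{m}[\tilde\sigma\tilde\sigma_s m+\frac{1}{2}\tilde\sigma^2 m_s]$. Writing $-\mathcal J/m$ explicitly and collecting then reproduces (\ref{ramp2}) — consistently recovering, when $\sigma_o=\gamma=0$, the classical quantile dynamics of a scalar diffusion.

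The step I expect to be the real obstacle is the identification of $-\mathcal J(t,m^f)/m(t,m^f)$ with $\int_\Theta\tilde\gamma\tilde\gamma_s\,\mu(d\theta)+\frac{1}{2}(m_s/m)\int_\Theta\tilde\gamma^2\,\mu(d\theta)$. For this I would use the adjoint identity $\langle J[\phi],m\rangle=\langle\phi,J^*[m]\rangle$ with $\phi=\ind_{(-\infty,m^f]}$, so that
\[
\mathcal J(t,m^f)=\bigl\langle J[\ind_{(-\infty,m^f]}],m(t,\cdot)\bigr\rangle=\int_\Theta\bigl[F(t,H_\theta(m^f))-F(t,m^f)\bigr]\mu(d\theta) ,
\]
modulo the truncation $\ind_{|s|<\epsilon}$ (absorbed into the drift as usual), where $H_\theta$ is the inverse of $s\mapsto s+\tilde\gamma(s,m^f,\theta)$; a second-order Taylor expansion $H_\theta(m^f)=m^f-\tilde\gamma+\tilde\gamma\tilde\gamma_s+O(\tilde\gamma^3)$ together with $F_p=m$, $F_{pp}=m_s$ then yields precisely the two jump drift terms, in complete analogy with the diffusion computation under the substitution $\tilde\sigma^2\leftrightarrow\int_\Theta\tilde\gamma^2\mu(d\theta)$ and $\tilde\sigma\tilde\sigma_s\leftrightarrow\int_\Theta\tilde\gamma\tilde\gamma_s\mu(d\theta)$. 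Making this rigorous requires controlling the invertibility and $s$-regularity of the jump map, the truncation bookkeeping, and the interchange of $\int_\Theta(\cdot)\,\mu(d\theta)$ with spatial differentiation and with the stochastic integrals; the integrability needed for these interchanges, as well as the finiteness of $\sup_{[0,t]}\mathbb{E}[|s_i|+|m^f|]$ needed to localize the It\^o--Wentzell formula, is supplied by Lemma~1. A secondary technical point is the justification of the It\^o--Wentzell formula itself for the composition of the SPDE solution $F(t,\cdot)$ with the semimartingale $m^f$, which rests once more on the $C^2$ spatial regularity and strict positivity of $m$ guaranteed by the non-degeneracy hypothesis.
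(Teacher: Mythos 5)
Your argument is correct in substance but follows a genuinely different route from the paper. You work at the macroscopic level: you integrate the stochastic Fokker--Planck equation of Lemma~2 in space to get an SPDE for the conditional CDF $F(t,p)$, and then apply an It\^o--Wentzell expansion to the identity $F(t,m^f(t))=f$, reading off the quantile dynamics by matching the $dB_o$ and $dt$ coefficients; the jump contribution is extracted from $J^*$ via the adjoint identity with $\phi=\ind_{(-\infty,q]}$ and a second-order Taylor expansion of the inverse jump map $H_\theta$. The paper instead works at the microscopic level: it replaces $m(t,\cdot)$ by a mollified empirical measure $\frac{1}{n}\sum_j\rho_\epsilon(\cdot-s_j)$, applies ordinary finite-dimensional It\^o calculus to the implicit equation $F(q,s_1,\ldots,s_n)=0$ using the particle SDEs, exploits the algebraic identity $\sum_i F_{s_i}=-F_q$, and passes to the limit $(n,\epsilon)\to(\infty,0)$, where the individual Brownian and Poissonian terms average out because $\sum_i\sigma^2F_{s_i}^2\to 0$. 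Each route has its advantages: the paper's particle construction makes the law-of-large-numbers cancellation of the idiosyncratic noise and jumps explicit and sidesteps the need for an It\^o--Wentzell formula for random fields, at the price of justifying a double limit over exchangeable particles; your version is shorter and stays entirely at the level of the conditional law, but it leans on the It\^o--Wentzell formula for the composition $F(t,m^f(t))$ and, in the jump step, on the invertibility and $C^2$ regularity of $s\mapsto s+\tilde\gamma(s,\cdot,\theta)$ together with control of the third-order remainder --- hypotheses not literally contained in H0 (the paper's route hides the analogous issue inside the formal limit of the terms $\sum_i\int_\Theta\gamma F_{s_i}F_q^{-1}\,\tilde N_i(dt,d\theta)$). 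Both arguments rely on the same non-degeneracy to guarantee $F_p=m>0$ and hence the applicability of the implicit function theorem, and both arrive at the same cancellation of the common-noise It\^o corrections and at the two jump drift terms of (\ref{ramp2}).
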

The structure of the quantile process was proved in \cite{kurtz} in the controlled-independent and jump-free case ($\gamma=0$). Proposition \ref{propq} extends the work in \cite{kurtz} to include   state-quantile feedback control
 case with jumps under suitable conditions imposed in H0.
\begin{proof} 
In Appendix.
\end{proof}

Since the ratio $\frac{m_s}{m}$ can be written as $(\log m)_s,$ we obtain 
\begin{equation} \label{ramp}
\left\{\begin{array}{lll}
dm^f= \tilde{b}(m^f,m^f)dt\\
-\tilde{\sigma} \tilde{\sigma}_s(m^f,m^f)dt\\ -\frac{1}{2} (\log m)_s \tilde{\sigma}^2(m^f,m^f)    dt     \\
-\int_{\Theta} \tilde{\gamma}(m^f,m^f,\theta)\tilde{\gamma}_s(m^f,m^f,\theta)\mu(d\theta) dt\\ -  \frac{1}{2} 
(\log m)_s \int_{\Theta}\tilde{\gamma}^2(m^f,m^f,\theta)\mu(d\theta) dt\\
+ \tilde{\sigma}_o(m^f,m^f)dB_o(t).
\end{array}
\right.
\end{equation}

\begin{remark}
Noting that $m$ is a random measure given by $\ m(t,ds)=\mathbb{P}_{ s_i(t) \ | \ \mathcal{F}_t^{B_o, m^f}} $ which is  the conditional distribution of $s_i(t)$ given the filtration  $\mathcal{F}_t^{B_o, m^f}=\sigma(B_o(t'), m^f(t'),\ t'\leq t),$ we obtain $m^f$ as a random process satisfying (\ref{ramp}).
When there is no common noise $\sigma_o \rightarrow 0,$ the individual and independent Brownian motions will generate a deterministic quantile if the strategies adopted by the decision-makers are adapted to $\mathcal{F}^{B_i}_t.$  From (\ref{ramp}), the deterministic quantile evolution is given by 

\begin{equation} \label{rampt2}
\left\{\begin{array}{lll}
\dot{m}^f(t)=
 \tilde{b}(m^f,m^f)\\
-\tilde{\sigma} \tilde{\sigma}_s(m^f,m^f) 
\\ -\frac{1}{2} (\log m)_s \tilde{\sigma}^2(m^f,m^f)         \\
-\int_{\Theta} \tilde{\gamma}(m^f,m^f,\theta)\tilde{\gamma}_s(m^f,m^f,\theta)\mu(d\theta)\\ -  \frac{1}{2} 
(\log m)_s \int_{\Theta}\tilde{\gamma}^2(m^f,m^f,\theta)\mu(d\theta).
\end{array}
\right.
\end{equation}
\end{remark}

\begin{example}[Gaussian Quantile]
Let $Z\sim \mathcal{N}(0,1).$
The unit Gaussian $f$-quantile function $Q_Z$ is the function satisfying the condition
$F_{Z}( Q_Z(f)) =f,$
for all $f \in (0,1).$ 
This function $Q_Z(f)$ is not available in simple closed-form, nor the expression of the inverse error function $\mbox{erf}^{-1}(f).$ However, its properties are well understood 
  through its characterization as the solution of the non-linear ordinary differential equation with boundary conditions. 
%
In the case of a Gaussian distribution $s_{nc}(t)= \mu(t)+\sigma(t)Z,$ with mean $\mu(t)$ and variance $\sigma^2(t)$, the quantile function of
that distribution is then $Q_{s_{nc}}(t,f)=\mu(t) + \sigma(t) Q_{Z}(f).$ Clearly, this solves  the system (\ref{ramp}).
  \end{example}
\begin{example}
Consider the hyperbolic tangent function $\mbox{tanh}( s)= \frac{e^{ s}-e^{-  s}}{e^{ s}+e^{- s}}.$ This is clearly a  bounded function and its range is  in $[-1,1].$ Its derivative is 
 $$ \mbox{tanh}_s(s)= \frac{\ (e^{s}+e^{- s})^2-(e^{s}-e^{-s})^2}{(e^{s}+e^{-s})^2}$$ $$= \frac{4 }{(e^{s}+e^{-s})^2}= \frac{1}{(\frac{e^{s}+e^{-s}}{2})^2}=\frac{1}{cosh^2(s)}=1-\mbox{tanh}^2(s),$$ which is again bounded and
  ranges in  $[0,1].$  Let  $\delta>0$ and   the drift $\bar{b}$ to be the hyperbolic tangent function $\delta \mbox{tanh}(\delta s).$ Then, $\bar{b}$  is continuously differentiable, its derivative  $\delta^2[1- \mbox{tanh}^2(\delta s)]$ is bounded with range in $[0,\delta^2].$
 We assume $\sigma,\sigma_o,\gamma(\theta)$ are independent of $s_i, m^f$. Theses functions  are differentiable and their derivative (with respect to $s_i,m^f$) are zero, hence  continuous and bounded. Thus, $(b,\sigma,\sigma_o,\gamma)$ fulfill the conditions in H0. We choose $\gamma(\theta)=\theta.$
 The state dynamics is $$ ds_i=\delta \mbox{tanh}(\delta s_i)dt+\sigma_i dB_i+\sigma_o dB_o+ \int_{\Theta}\theta \tilde{N}(dt,d\theta).
 $$
 with $\Theta=\mathbb{R},\ $
 The operator $J$ is 
 $$J[\phi]:= \int_{\mathbb{R}}\{ \phi(s(t)+\theta)-\phi(s(t)) \}\mu(d\theta),$$
and the adjoint $J^*$ of the operator $J$ is given by

 \begin{equation} \label{rampyy}
\left\{\begin{array}{lll}
\langle J[\phi],m\rangle =\langle \phi,J^*[m]\rangle\\
=\int_{s\in \mathbb{R}} \int_{\theta\in \mathbb{R}}  \{ \phi(s+\theta)-\phi(s) \} \mu(d\theta) m(s) ds\\
=-[\int \mu(\theta)d\theta] \int \phi(s) \ m(s) ds  +\int_{s\in \mathbb{R}}  [\int_{\theta\in \mathbb{R}}   \phi(s+\theta) \mu(\theta)d\theta] m(s) ds\\
=-\lambda \int \phi(s) \ m(s) ds + \int_{s\in \mathbb{R}}  [\int_{y\in \mathbb{R}}   \phi(y) \mu(y-s)dy] m(s) ds\\
=-\lambda \int \phi(s) \ m(s) ds + \int_{y\in \mathbb{R}}    \phi(y)dy  [\int_{s\in \mathbb{R}}  \mu(y-s)m(s) ds] \\
=-\lambda \int \phi(s) \ m(s) ds + \int_{s\in \mathbb{R}}    \phi(s)ds  [\int_{z\in \mathbb{R}}  \mu(-z)m(s+z) dz] \\
=-\lambda \int \phi(s) \ m(s) ds + \int_{s\in \mathbb{R}}    \phi(s)ds  [\int_{\theta\in \mathbb{R}}  \mu(\theta)m(s-\theta) d\theta] \\
= \int \phi(s) ds [ -\lambda m(s)+\int_{\theta\in \mathbb{R}}  \mu(\theta)m(s-\theta) d\theta]
\end{array}
\right.
\end{equation} where $\lambda=\int \mu(\theta)d\theta,\  \mu(d\theta)=\lambda \tilde{\mu}(\theta)d\theta$ and $\tilde{\mu}(\theta),$ is a probability density function.
By identification, 
 \begin{equation} \label{rampyyy}
\left\{\begin{array}{lll}
J^*[m]=-\lambda m(s)+\int_{\theta\in \mathbb{R}}  \mu(\theta)m(s-\theta) d\theta\\
=-\lambda m(s)+\lambda \int_{\theta\in \mathbb{R}}  \tilde{\mu}(\theta)m(s-\theta) d\theta
\end{array}
\right.
\end{equation} 

The stochastic integro-Fokker-Planck equation yields
 \begin{equation} \label{ifpk}
\left\{\begin{array}{lll}
dm(t,s)= \{-(\bar{b}m)_s+ \frac{1}{2}(\sigma^2m)_{ss} +\frac{1}{2}\sigma^2_{o}m_{ss}\}dt\\ 
+[-\lambda m(t,s)+\lambda \int_{\theta\in \mathbb{R}}  \tilde{\mu}(\theta)m(t,s-\theta) d\theta] dt  \\  \  \  - \sigma_o m_s  dB_o,
\end{array}
\right.
\end{equation} 

Set $m(t,s)=m_{nc}(t,s-s_0-\sigma_o B_o(t)).$ Then
 $$m_{nc,t}=-\delta (\mbox{tanh}(\delta s) m_{nc})_s+ \frac{1}{2}(\sigma^2m)_{nc,ss}$$ $$
-\lambda m_{nc}(t,s)+\lambda \int_{\theta\in \mathbb{R}}  \tilde{\mu}(\theta)m_{nc}(t,s-\theta) d\theta$$

Let compute $m_{nc}.$ We try a solution in the following form: $$m_{nc}(t,s)=\frac{1}{2}[e^{\delta s-\frac{1}{2}\delta^2 t}+e^{-\delta s-\frac{1}{2}\delta^2 t}]\tilde{m}_{nc}(t,s).$$
By differentiating with respect to $t,$ one gets
 \begin{equation} \label{ifpk2t}
\left\{\begin{array}{lll}
m_{nc,t}= 
\frac{1}{2}[-\frac{1}{2}\delta^2 e^{\delta s-\frac{1}{2}\delta^2 t}-\frac{1}{2}\delta^2e^{-\delta s-\frac{1}{2}\delta^2 t}]\tilde{m}_{nc} 
+ \frac{1}{2}[e^{\delta s-\frac{1}{2}\delta^2 t}+e^{-\delta s-\frac{1}{2}\delta^2 t}]\tilde{m}_{nc,t}\\
=
-\frac{\delta^2}{4}[ e^{\delta s-\frac{1}{2}\delta^2 t}+e^{-\delta s-\frac{1}{2}\delta^2 t}]\tilde{m}_{nc} 
+ \frac{1}{2}[e^{\delta s-\frac{1}{2}\delta^2 t}+e^{-\delta s-\frac{1}{2}\delta^2 t}]\tilde{m}_{nc,t}
\end{array}
\right.
\end{equation} 

Since 
$$
\left\{\begin{array}{lll}
\frac{1}{2}[e^{\delta s-\frac{1}{2}\delta^2 t}+e^{-\delta s-\frac{1}{2}\delta^2 t}] \delta \mbox{tanh}(\delta s)\\
=\frac{\delta}{2(e^{\delta s}+e^{-\delta s})}[e^{\delta s-\frac{1}{2}\delta^2 t}+e^{-\delta s-\frac{1}{2}\delta^2 t}]  [e^{\delta s}-e^{-\delta s}]\\
=\frac{\delta}{2}[e^{\delta s-\frac{1}{2}\delta^2 t}-e^{-\delta s -\frac{1}{2}\delta^2 t}],
\end{array}
\right.
$$
we obtain
\begin{equation} \label{ifpk22}
\left\{\begin{array}{lll}
\delta (\mbox{tanh}(\delta s) m_{nc})_s = 
\delta (\mbox{tanh}(\delta s) \frac{1}{2}[e^{\delta s-\frac{1}{2}\delta^2 t}+e^{-\delta s-\frac{1}{2}\delta^2 t}]\tilde{m}_{nc})_s\\
= (\frac{\delta}{2}(e^{\delta s-\frac{1}{2}\delta^2 t}-e^{-\delta s -\frac{1}{2}\delta^2 t})\tilde{m}_{nc})_s\\
=\frac{\delta^2}{2}( e^{\delta s-\frac{1}{2}\delta^2 t}+ e^{-\delta s -\frac{1}{2}\delta^2 t})\tilde{m}_{nc}
+\frac{\delta}{2}(e^{\delta s-\frac{1}{2}\delta^2 t}-e^{-\delta s -\frac{1}{2}\delta^2 t})\tilde{m}_{nc,s}
\end{array}
\right.
\end{equation} 

 \begin{equation} \label{ifpk2t}
\left\{\begin{array}{lll}
m_{nc,s}=    \frac{\delta}{2}[e^{\delta s-\frac{1}{2}\delta^2 t}-e^{-\delta s-\frac{1}{2}\delta^2 t}]\tilde{m}_{nc}
+  \frac{1}{2}[e^{\delta s-\frac{1}{2}\delta^2 t}+e^{-\delta s-\frac{1}{2}\delta^2 t}]\tilde{m}_{nc,s},\\
m_{nc,ss}=  \frac{\delta^2}{2}[e^{\delta s-\frac{1}{2}\delta^2 t}+e^{-\delta s-\frac{1}{2}\delta^2 t}]\tilde{m}_{nc}
+\frac{\delta}{2}[e^{\delta s-\frac{1}{2}\delta^2 t}-e^{-\delta s-\frac{1}{2}\delta^2 t}]\tilde{m}_{nc,s}\\
+  \frac{\delta}{2}[e^{\delta s-\frac{1}{2}\delta^2 t}-e^{-\delta s-\frac{1}{2}\delta^2 t}]\tilde{m}_{nc,s}
+\frac{1}{2}[e^{\delta s-\frac{1}{2}\delta^2 t}+e^{-\delta s-\frac{1}{2}\delta^2 t}]\tilde{m}_{nc,ss}\\
=
\frac{\delta^2}{2}[e^{\delta s-\frac{1}{2}\delta^2 t}+e^{-\delta s-\frac{1}{2}\delta^2 t}]\tilde{m}_{nc}
+\frac{2\delta}{2}[e^{\delta s-\frac{1}{2}\delta^2 t}-e^{-\delta s-\frac{1}{2}\delta^2 t}]\tilde{m}_{nc,s}\\
+\frac{1}{2}[e^{\delta s-\frac{1}{2}\delta^2 t}+e^{-\delta s-\frac{1}{2}\delta^2 t}]\tilde{m}_{nc,ss}
\end{array}
\right.
\end{equation}

 \begin{equation} \label{ifpk2tt}
\left\{\begin{array}{lll}
\frac{1}{2}(\sigma^2m_{nc})_{ss}=
\frac{\delta^2\sigma^2}{4}[e^{\delta s-\frac{1}{2}\delta^2 t}+e^{-\delta s-\frac{1}{2}\delta^2 t}]\tilde{m}_{nc}
+\frac{2\delta\sigma^2}{4}[e^{\delta s-\frac{1}{2}\delta^2 t}-e^{-\delta s-\frac{1}{2}\delta^2 t}]\tilde{m}_{nc,s}\\
+\frac{\sigma^2}{4}[e^{\delta s-\frac{1}{2}\delta^2 t}+e^{-\delta s-\frac{1}{2}\delta^2 t}]\tilde{m}_{nc,ss}
\end{array}
\right.
\end{equation}

Putting  together we obtain

 \begin{equation} \label{ifpk3r}
\left\{\begin{array}{lll}
-\frac{\delta^2}{4}[ e^{\delta s-\frac{1}{2}\delta^2 t}+e^{-\delta s-\frac{1}{2}\delta^2 t}]\tilde{m}_{nc} \\
+ \frac{1}{2}[e^{\delta s-\frac{1}{2}\delta^2 t}+e^{-\delta s-\frac{1}{2}\delta^2 t}]\tilde{m}_{nc,t}\\
\\ =
-\frac{\delta^2}{2}( e^{\delta s-\frac{1}{2}\delta^2 t}+ e^{-\delta s -\frac{1}{2}\delta^2 t})\tilde{m}_{nc}\\
-\frac{\delta}{2}(e^{\delta s-\frac{1}{2}\delta^2 t}-e^{-\delta s -\frac{1}{2}\delta^2 t})\tilde{m}_{nc,s}\\
+\frac{\delta^2\sigma^2}{4}[e^{\delta s-\frac{1}{2}\delta^2 t}+e^{-\delta s-\frac{1}{2}\delta^2 t}]\tilde{m}_{nc}\\
+\frac{2\delta\sigma^2}{4}[e^{\delta s-\frac{1}{2}\delta^2 t}-e^{-\delta s-\frac{1}{2}\delta^2 t}]\tilde{m}_{nc,s}\\
+\frac{\sigma^2}{4}[e^{\delta s-\frac{1}{2}\delta^2 t}+e^{-\delta s-\frac{1}{2}\delta^2 t}]\tilde{m}_{nc,ss}
\\
-\frac{\lambda}{2}[e^{\delta s-\frac{1}{2}\delta^2 t}+e^{-\delta s-\frac{1}{2}\delta^2 t}]\tilde{m}_{nc}
\\
+\lambda \int_{\theta\in \mathbb{R}}  \tilde{\mu}(\theta)  \frac{1}{2}[e^{\delta (s-\theta)-\frac{1}{2}\delta^2 t}+e^{-\delta (s-\theta)-\frac{1}{2}\delta^2 t}]\tilde{m}_{nc}(t,s-\theta) d\theta

\end{array}
\right.
\end{equation} 

By choosing $\sigma=1,$ the latter system is reduced to

 \begin{equation} \label{ifpk3s}
\left\{\begin{array}{lll}
+ \frac{1}{2}[e^{\delta s-\frac{1}{2}\delta^2 t}+e^{-\delta s-\frac{1}{2}\delta^2 t}]\tilde{m}_{nc,t}\\
\\ =
+\frac{\sigma^2}{4}[e^{\delta s-\frac{1}{2}\delta^2 t}+e^{-\delta s-\frac{1}{2}\delta^2 t}]\tilde{m}_{nc,ss}
\\
-\frac{\lambda}{2}[e^{\delta s-\frac{1}{2}\delta^2 t}+e^{-\delta s-\frac{1}{2}\delta^2 t}]\tilde{m}_{nc}
\\
+\lambda \int_{\theta\in \mathbb{R}}  \tilde{\mu}(\theta)  \frac{1}{2}[e^{\delta (s-\theta)-\frac{1}{2}\delta^2 t}+e^{-\delta (s-\theta)-\frac{1}{2}\delta^2 t}]\tilde{m}_{nc}(t,s-\theta) d\theta
\end{array}
\right.
\end{equation} 

Thus, the system becomes

 \begin{equation} \label{ifpk3st}
\left\{\begin{array}{lll}
 \tilde{m}_{nc,t}=
\frac{1}{2}\tilde{m}_{nc,ss}
-\lambda\tilde{m}_{nc}
\\
+\frac{2\lambda}{e^{\delta s-\frac{1}{2}\delta^2 t}+e^{-\delta s-\frac{1}{2}\delta^2 t}} \int_{\theta\in \mathbb{R}}  \tilde{\mu}(\theta)  \frac{1}{2}[e^{\delta (s-\theta)-\frac{1}{2}\delta^2 t}+e^{-\delta (s-\theta)-\frac{1}{2}\delta^2 t}]\tilde{m}_{nc}(t,s-\theta) d\theta
\end{array}
\right.
\end{equation} 
By simplifying by $e^{-\frac{1}{2}\delta^2 t},$ we obtain 
\begin{equation} \label{ifpk3str}
\left\{\begin{array}{lll}
 \tilde{m}_{nc,t}=
\frac{1}{2}\tilde{m}_{nc,ss}
-\lambda\tilde{m}_{nc}
\\
+\frac{2\lambda}{e^{\delta s}+e^{-\delta s}} \int_{\theta\in \mathbb{R}}  \tilde{\mu}(\theta)  \frac{1}{2}[e^{\delta (s-\theta)}+e^{-\delta (s-\theta)}]\tilde{m}_{nc}(t,s-\theta) d\theta
\end{array}
\right.
\end{equation} 
We now decompose the term  $\frac{1}{2}[e^{\delta (s-\theta)}+e^{-\delta (s-\theta)}]$ using parallelogram  law: $$
 \frac{1}{2}[e^{\delta (s-\theta)}+e^{-\delta (s-\theta)}]= \frac{1}{2}[e^{\delta s}+e^{-\delta s}]. \frac{1}{2}[e^{-\delta \theta}+e^{\delta \theta}]+ \frac{1}{2}[e^{\delta s}-e^{-\delta s}]. \frac{1}{2}[e^{-\delta \theta}-e^{\delta \theta}]
 $$
 By expanding the terms we arrive at
 \begin{equation} \label{ifpk3str}
\left\{\begin{array}{lll}
 \tilde{m}_{nc,t}=
\frac{1}{2}\tilde{m}_{nc,ss}
-\lambda\tilde{m}_{nc}
\\
+\frac{2\lambda}{e^{\delta s}+e^{-\delta s}} \int_{\theta\in \mathbb{R}}  \tilde{\mu}(\theta)  \frac{1}{2}[e^{\delta s}+e^{-\delta s}]. \frac{1}{2}[e^{-\delta \theta}+e^{\delta \theta}]\tilde{m}_{nc}(t,s-\theta) d\theta\\
+\frac{2\lambda}{e^{\delta s}+e^{-\delta s}} \int_{\theta\in \mathbb{R}}  \tilde{\mu}(\theta)  \frac{1}{2}[e^{\delta s}-e^{-\delta s}]. \frac{1}{2}[e^{-\delta \theta}-e^{\delta \theta}]
\tilde{m}_{nc}(t,s-\theta) d\theta
\end{array}
\right.
\end{equation}

 \begin{equation} \label{ifpk3str2}
\left\{\begin{array}{lll}
 \tilde{m}_{nc,t}=
\frac{1}{2}\tilde{m}_{nc,ss}
-\lambda\tilde{m}_{nc}
\\
+\lambda \int_{\theta\in \mathbb{R}}  \tilde{\mu}(\theta)   \frac{1}{2}[e^{-\delta \theta}+e^{\delta \theta}]\tilde{m}_{nc}(t,s-\theta) d\theta\\
+\lambda \ \mbox{tanh}(\delta s) \int_{\theta\in \mathbb{R}}  \tilde{\mu}(\theta) \frac{1}{2}[e^{-\delta \theta}-e^{\delta \theta}]
\tilde{m}_{nc}(t,s-\theta) d\theta
\end{array}
\right.
\end{equation} 
The term $\int_{\theta\in \mathbb{R}}  \tilde{\mu}(\theta) \frac{1}{2}[e^{-\delta \theta}-e^{\delta \theta}]\tilde{m}_{nc}(t,s-\theta) d\theta$  vanishes. It remains to solve the following system:
 \begin{equation} \label{ifpk3str2}
\left\{\begin{array}{lll}
 \tilde{m}_{nc,t}=
\frac{1}{2}\tilde{m}_{nc,ss}
-\lambda\tilde{m}_{nc}
+\lambda \int_{\theta\in \mathbb{R}}  \tilde{\mu}(\theta)   \frac{1}{2}[e^{-\delta \theta}+e^{\delta \theta}]\tilde{m}_{nc}(t,s-\theta) d\theta
\end{array}
\right.
\end{equation} 
This is latter system is the Fokker-Planck equation associated to the process $ B(t)+\int_{\mathbb{R}} \theta {N}(t,d\theta)$
where  ${N}$ is a Poisson jump process with jump rate $\theta$ and with L\'evy measure $\hat{\mu}(\theta):=\frac{1}{2}[e^{-\delta \theta}+e^{\delta \theta} ]\hat{\mu}(\theta).$
Denote by $\tilde{\mu}_{N}(t,s)$ the probability density of  ${N}(t,.)$ and  by $\tilde{\mu}_{B}(t,s)$ the probability density of  $B(t).$ Then,  the $\tilde{m}_{nc}$ is explicitly given by

$$
\tilde{m}_{nc}(.,s)=[\tilde{\mu}_{B}*\tilde{\mu}_{N}](.,s)=\int_{y}\tilde{\mu}_{B}(t,s-y)\tilde{\mu}_{N}](t,y)dy,$$
 where $*$ denotes the convolution operator. 
$$  m_{nc}(t,s)=\frac{1}{2}[e^{\delta s-\frac{1}{2}\delta^2 t}+e^{-\delta s-\frac{1}{2}\delta^2 t}] [\tilde{\mu}_{B}*\tilde{\mu}_{N}](t,s)
=\frac{1}{2}[e^{\delta s-\frac{1}{2}\delta^2 t}+e^{-\delta s-\frac{1}{2}\delta^2 t}] \int_{y}\tilde{\mu}_{B}(t,s-y)\tilde{\mu}_{N}(t,y)dy
$$
Thus, the closed-form expression is  
$$m(t,s)=m_{nc}(t,s-s_0-\sigma_o B_o(t))= $$ $$
\frac{1}{2}[e^{\delta s-\delta s_0-\delta\sigma_o B_o(t)-\frac{1}{2}\delta^2 t}+e^{-\delta s+\delta s_0+\delta\sigma_o B_o(t)-\frac{1}{2}\delta^2 t}] \int_{y}\tilde{\mu}_{B}(t,s- s_0-\sigma_o B_o(t)-y)\tilde{\mu}_{N}(t,y)dy
$$

The quantile system reduces to
\begin{equation} \label{rampyut}
\left\{\begin{array}{lll}
dm^f= [ \delta tanh(\delta m^f)
 -\frac{1}{2} (\log m)_s   
 -  \frac{1}{2} 
(\log m)_s \int_{\Theta}\theta\mu(d\theta)] dt\\
+ {\sigma}_odB_o(t).
 
\end{array}
\right.
\end{equation}

When $\delta$ vanishes,
the quantile becomes $m^f(t)=s_0+\sigma_oB_o(t)+F^{-1}_{B(t)+\int_{\mathbb{R}} \theta {N}(t,d\theta)}(f),$ where $F^{-1}_X$ is the quantile associated to the mixed process  $X(t)=B(t)+\int_{\mathbb{R}} \theta {N}(t,d\theta).$ It is easily to check that the latter expression solves the quantile SDE (\ref{ramp})..  It is well-known that the Wasserstein distance $W_2$  between two distributions $\mu_0$ and $\mu_1$ is related to the quantile process in one-dimensional case via the relation
$$
W_2^2(\mu_0,\mu_1)=\int_0^1 [ F^{-1}_0(s)-F^{-1}_1(s)]^2 ds= \int_0^1 [ Q_{X_0}(s)-Q_{X_1}(s)(s)]^2 ds= \int_0^1 [ m^f_0-m^f_1]^2 df.
$$
Based on the explicit representation of the quantile one can target best response problems such as  $\inf_{a_i}\ W_2(\mathcal{L}^{target}_i, m_T).$ 
\end{example}


\subsubsection*{Stochastic maximum principle}
Let $(p,q,q_0,\bar{r})$ be the adjoint processes associated to the drift, individual diffusion, common diffusion and jump terms respectively.
Let $H$ be the Pontryagin function $$H(t,s_i,m^f,p,q,q_{o},\bar{r})=r+\bar{b}p+\sigma q+\sigma_oq_o+\int_{\Theta}\gamma \bar{r}(.,\theta)\mu(d\theta) .$$ 
Define the first order adjoint process as
$$
dp=-H_s dt+q dB_i+q_o dB_o+\int_{\Theta} \bar{r}(.,\theta) \tilde{N}_i(dt,d\theta),\ $$  $$ \ p(T)=g_s(s_{i}(T),m^f(T)),
$$
where $H_s$ is a notation for $r_s+\bar{b}_sp+\sigma_s q+\sigma_{o,s}q_o+\int_{\Theta}\gamma_s \bar{r}(.,\theta)\mu(d\theta)$ evaluated at the best response strategy, $$\psi_s(t)=\partial_s\psi(t,s^*_i(t),m^f(t),a_i^*(t)), \ \psi\in \{r,\bar{b},\sigma_o,\sigma\},$$ is the partial derivative with respect to $s.$

\begin{prop}
If the functions $b, \sigma, \sigma_o,r, g$ are  continuously differentiable with respect to $(s_i, m^f),$ all their first-order derivatives with respect to $(s_i,m^f)$ are continuous in $(s_i, m^f, a_i),$ and bounded. Then, the first-order adjoint system is a linear SDE with almost surely bounded coefficient functions. There is a unique $\mathcal{F}^{B_o,B,N,s_0}-$adapted solution such that
$$
\mathbb{E}\left[ \sup_{t\in [0,T]}|p(t)|^2{+}\int_0^T( |q(t)|^2+ |q_o(t)|^2{+}\int_{\Theta}\bar{r}^2\mu(d\theta) )dt\right]  < +\infty.
$$
\end{prop}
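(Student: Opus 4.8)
The plan is to recognize the adjoint system as a \emph{linear backward stochastic differential equation (BSDE) with jumps} in the quadruple $(p,q,q_o,\bar r)$ and to invoke the standard well-posedness theory for such equations. Written in normal form the equation reads
\[
-dp = \Big( r_s + \bar b_s\, p + \sigma_s\, q + \sigma_{o,s}\, q_o + \int_{\Theta} \gamma_s\, \bar r(\cdot,\theta)\,\mu(d\theta)\Big)\,dt \; - q\, dB_i - q_o\, dB_o - \int_{\Theta}\bar r(\cdot,\theta)\,\tilde N_i(dt,d\theta),
\]
with terminal condition $p(T) = \xi := g_s(s_i^*(T), m^f(T))$, all partial derivatives being evaluated along the fixed best-response triple $(s_i^*(t), m^f(t), a_i^*(t))$. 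First I would check that the data lie in the right spaces: by hypothesis $g_s$ is bounded, so $\xi \in L^2(\Omega,\mathcal F_T,\mathbb P)$; similarly $r_s$ is bounded, so the free term $t\mapsto r_s(t,s_i^*(t),m^f(t),a_i^*(t))$ is a bounded, $\mathbb F$-progressively measurable process, hence an element of $L^2_{\mathbb F}(\Omega\times[0,T])$, where $\mathbb F$ denotes the completed filtration generated by $(s_{i0},B_i,B_o,N_i)$. The coefficients $\bar b_s,\sigma_s,\sigma_{o,s}$ are $dt\otimes d\mathbb P$-essentially bounded, and, adding the natural requirement that $t\mapsto\int_\Theta \gamma_s^2(\cdot,\theta)\,\mu(d\theta)$ be essentially bounded (automatic when $\mu$ is finite or $\gamma_s$ has compact $\theta$-support), the linear functional $\bar r\mapsto\int_\Theta\gamma_s\,\bar r\,\mu(d\theta)$ is, by Cauchy--Schwarz, bounded on $L^2(\Theta,\mu)$ uniformly in $(\omega,t)$.

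Next, the generator $f(t,p,q,q_o,\bar r)=r_s(t)+\bar b_s(t)p+\sigma_s(t)q+\sigma_{o,s}(t)q_o+\int_\Theta\gamma_s(t,\theta)\bar r(\theta)\,\mu(d\theta)$ is affine in $(p,q,q_o,\bar r)$ with deterministically bounded coefficients, hence globally Lipschitz with a deterministic constant controlled by the sup-bounds of the derivatives in the hypothesis. Since $B_i$, $B_o$ and $N_i$ are independent and $N_i$ is a Poisson random measure, $\mathbb F$ has the martingale representation property with respect to $(B_i,B_o,\tilde N_i)$; the classical existence and uniqueness theorem for Lipschitz BSDEs driven by Brownian motions and an independent Poisson random measure (Tang--Li; Barles--Buckdahn--Pardoux; Situ) then yields a unique $\mathbb F$-adapted solution $(p,q,q_o,\bar r)$ in $\mathcal S^2_{\mathbb F}([0,T])\times L^2_{\mathbb F}([0,T])\times L^2_{\mathbb F}([0,T])\times L^2_{\mathbb F}([0,T]\times\Theta;\mu)$, which is precisely the claimed integrability. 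As an alternative route exploiting linearity, I would introduce the forward adjoint weight $\Gamma$ solving $d\Gamma=\Gamma\big(\bar b_s\,dt+\sigma_s\,dB_i+\sigma_{o,s}\,dB_o+\int_\Theta\gamma_s\,\tilde N_i(dt,d\theta)\big)$, $\Gamma(0)=1$; then $\Gamma(t)p(t)=\mathbb E\big[\Gamma(T)\xi+\int_t^T\Gamma(u)r_s(u)\,du\,\big|\,\mathcal F_t\big]$, and $(q,q_o,\bar r)$ are read off from the martingale representation of the right-hand side, with $\Gamma$ and $\Gamma^{-1}$ having moments of all orders because the coefficients are bounded.

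Finally I would establish the quantitative estimate directly, since it is also the a priori bound behind uniqueness. Applying It\^o's formula to $|p(t)|^2$ on $[t,T]$ and taking expectations removes the $dB_i$, $dB_o$ and compensated-Poisson martingale terms, leaving $\mathbb E|p(t)|^2+\mathbb E\int_t^T\big(|q(u)|^2+|q_o(u)|^2+\int_\Theta\bar r(u,\theta)^2\,\mu(d\theta)\big)\,du$ on the left; the cross term $2\,\mathbb E\int_t^T p(u)f(u)\,du$ is bounded, by Young's inequality together with the Lipschitz bound, by $\varepsilon\,\mathbb E\int_t^T(|q|^2+|q_o|^2+\int_\Theta\bar r^2\,\mu(d\theta))\,du+C_\varepsilon\,\mathbb E\int_t^T|p(u)|^2\,du$ plus an $L^1$ contribution from the bounded term $r_s$. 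Choosing $\varepsilon$ small enough to absorb the martingale part and applying Gronwall's lemma yields $\mathbb E\int_0^T(|q|^2+|q_o|^2+\int_\Theta\bar r^2\,\mu(d\theta))\,dt<\infty$ and $\sup_{t\le T}\mathbb E|p(t)|^2<\infty$, and the Burkholder--Davis--Gundy inequality upgrades the latter to $\mathbb E\sup_{t\le T}|p(t)|^2<\infty$. The step I expect to be the main obstacle is the treatment of the jump channel: one must secure the $L^2(\mu)$-integrability of $\gamma_s$ so that the generator is genuinely Lipschitz in the $\bar r$-variable (pointwise boundedness of $\gamma_s$ does not suffice when $\mu(\Theta)=+\infty$), and one must invoke the martingale representation theorem for the combined Brownian--Poisson filtration; the remaining estimates are routine linear-BSDE arguments.
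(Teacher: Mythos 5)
Your proposal is correct and follows essentially the same route as the paper's (very terse) proof: recognize the adjoint system as a linear backward SDE with bounded coefficients and conclude existence, uniqueness and the $L^2$ estimate from the standard Lipschitz-BSDE theory resting on the martingale representation theorem for the Brownian--Poisson filtration. Your additional caveat that pointwise boundedness of $\gamma_s$ does not yield Lipschitz continuity of the generator in $\bar r$ when $\mu(\Theta)=+\infty$ (one needs $\int_\Theta \gamma_s^2\,\mu(d\theta)$ essentially bounded) is a legitimate refinement that the paper's proof silently glosses over.
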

\begin{proof}
Assume conditions $H0$  hold. Then,  one obtain a linear SDE  in $(p,q,q_o,\bar{r}).$ In addition, the coefficient of the  linear SDE of Liouiville type has almost surely bounded coefficient function. Such a linear SDE ha a solution. From martingale representation theorem, there is unique solution that is $\mathcal{F}^{B_o,B,N,s_0}-$adapted solution such that
$$
\mathbb{E}\left[ \sup_{t\in [0,T]}|p(t)|^2{+}\int_0^T( |q(t)|^2+ |q_o(t)|^2{+}\int_{\Theta}\bar{r}^2\mu(d\theta) )dt\right]  < +\infty.
$$
 
\end{proof}
These strong smoothness conditions on  $b, \sigma, \sigma_o,r, g$  can be considerably weakened using representations of weak sub/super-differential sets.

\section{Illustrative Example} \label{sec:prosumer}
In this section we examine a basic mean-field game with common noise. The energy sector is evolving with the involvement of prosumers.  A prosumer (producer-consumer) is a user that not only consumes electricity, but can also produce and store electricity. 
We examine how the prosumer energy market can be used to compensate a portion of peak hours energy (NegaWatts). NegaWatts is the amount of demand they shift from peak to off-peak time. The prosumers can contribute to the NegaWatts using their own storage or by selling energy to the operators, which creates a  prosumer market  within the global energy market.
 Figure \ref{market} illustrates a situation in which the prosumer market contribution needs to  be  particularly important between 5pm and 9pm where the production capacity of the energy operators is exceeded. The work in \cite{meyn} proposed individual risk in mean-field control with application to demand response. Their model is a leader-follower mean-field model in which the leader makes decisions based on quantiles. However, their model differs from the common noise   model of mean-field games considered in this paper.
\begin{figure}[htb]
    \centering
    \includegraphics[width=9cm]{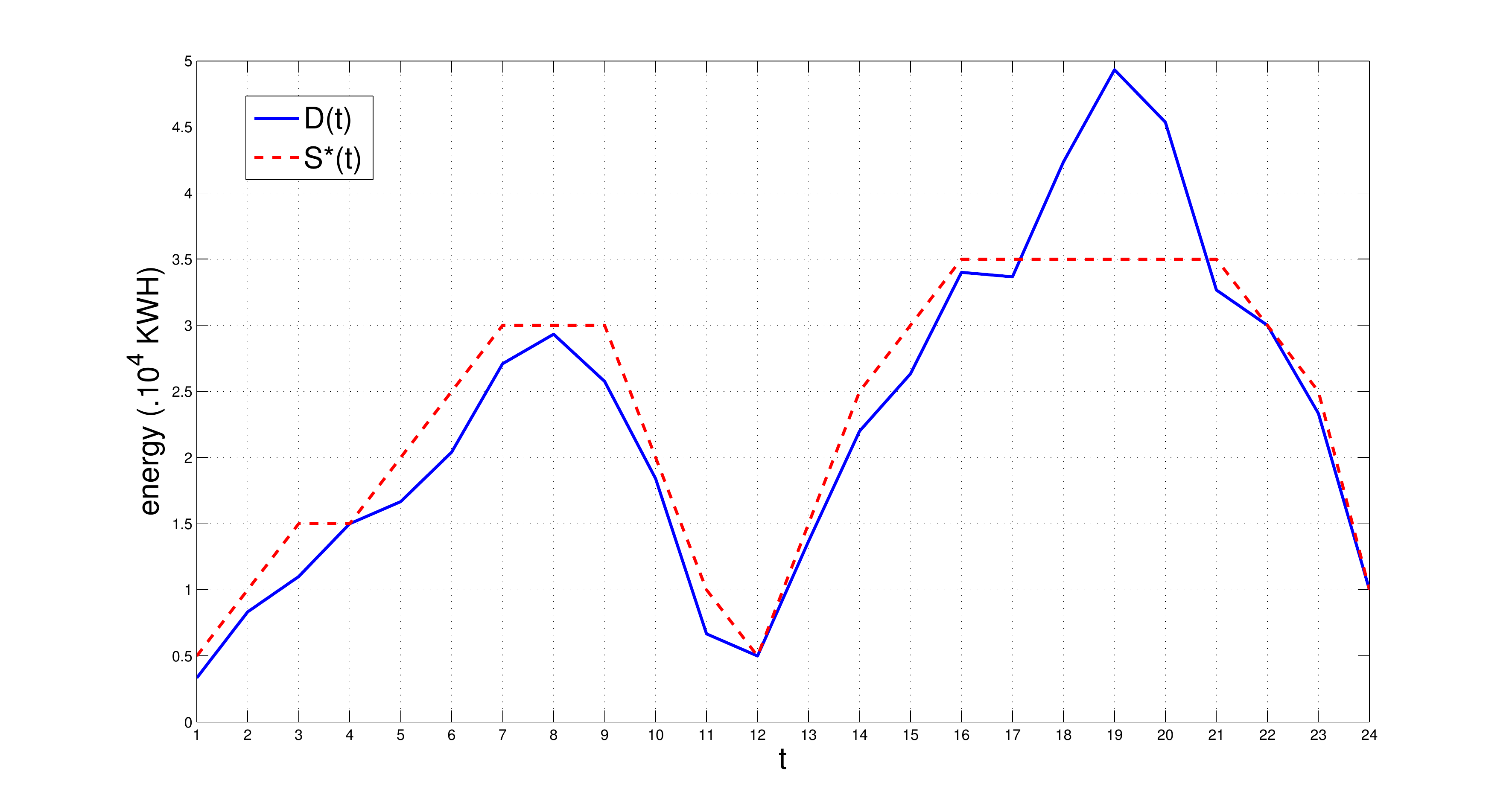}\\ 
    \caption{Prosumer market can be used to compensate the demand during peak hours between 5pm and 9pm.} \label{market}
\end{figure}

\subsubsection*{Multi-Winner Auction }
\begin{figure}[htb]
    \centering
    \includegraphics[width=7cm]{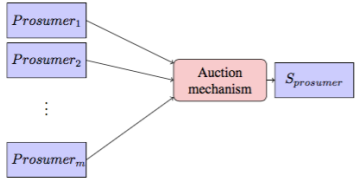}\\ 
    \caption{Multi-Winner Auction between Prosumers} \label{figauction}
\end{figure}

We consider  a dynamic  interaction with $N$ prosumers in an energy market who adopted an auction mechanism  to sell their extra energy to an operator who needs that 
energy during peak hours. We are not imposing the lowest price auction format because there is a total demand and the operator is looking for a certain quantity to compensate 
the NegaWatts. As a consequence, several prosumers may win the auction (Fig. \ref{figauction}). Let us denote the set of prosumers  by $\mathcal{I}=\{1,\ldots, n\}.$ Let us assume that the $\bar{n}$ lowest bids will be selected by the operator, with 
$\bar{n}\leq n.$ The authority allows only positive prices (bids). We change the scale and work with the log-price or log-bid in order to get a wider spectrum that covers all real values.  The log-bid of a prosumer 
belongs to  $(-\infty, +\infty)=\mathbb{R}.$ This will  open a possibility to model the log-bid through unconstrained Brownian motions driven processes and allow us to derive explicit solution. 
\subsubsection*{Supply and Demand Matching}
The interaction between the prosumers and the energy operator starts at time $0.$
At time step $t,$ the energy operator needs a certain quantity to compensate the demand of the customers (for example, during peak hours).
The market clearing condition is when demand equals supply and the bid is less than the market price $p^n(t)$ at time $t.$
The operator is looking for $\bar{n} \bar{Q}(t)$ units of  energy at time $t$.The selected supply of the prosumers at time $t$ is  $$\bar{Q}(t) \sum_{i\in \mathcal{I}} \ind_{\{\log b_{i}(t)\leq \log p^n\}}.$$
We define the mean-field process of the finite population as $$m^n_t=\frac{1}{n}\sum_{i\in \mathcal{I}}\delta_{\log b_{i}(t)}.$$ The demand equals supply if
$\bar{Q}(t) \sum_{i\in \mathcal{I}} \ind_{\{\log b_{i}(t)\leq \log p^n\}}=\bar{n}(t)\bar{Q}(t),$ i.e., $\int_{-\infty}^{\log p^n} m^n(t,db)=\frac{\bar{n}}{n}.$ This means that
$
m^n [t, (-\infty, \log p^n]]=\frac{\bar{n}}{n}.
$

As the quantities are larger, and the scales are chosen such that $\frac{\bar{n}}{n}$ goes to a fraction $f$ of quantity of energy available to the market.
The global market price becomes the $f-$quantile process $m^f$ of the measure $m$ that is the limit of the empirical distribution $m^n$ of the 
log bids. Define the quantile associated to the fraction $f$ at time $t,$ as
$m^f(t)=Q(t,f)\,=\,\inf \left\{p\in \mathbb{R}\ :f\leq F(t,p)\right\},$ where $F(t,p)=m(t,(-\infty, \log p])$ is the cumulative distribution, this solves $F(t,m^f(t))=f,$ even when $F$ has some flat regions or discontinuous. Note that the distribution $m$ will be stochastic due to the global market uncertainty. It allows us to capture a range of prosumer behaviors for the energy of the future. Each prosumer wants to be among the winners of the auction, the ones chosen by the operator and therefore the bids should be below the market price $p$ and the prosumer supply should match the fraction quantity $f.$  The winners gets  a reward  which is $1$ when $a_1\leq a_2\leq \ldots a_{\bar{n}} \leq p^n$ and the total quantity $\bar{n}\bar{Q}(t)$ units that the operator is looking for is achieved.

\subsubsection*{ Log-bid dynamics with common noise}
Here the state of prosumer $i$  at time $t$ is $s_{i}(t),$ which  represents the log bid of prosumer $i.$ $s_i$ is a stochastic process and solves the stochastic 
differential equation (\ref{stateone}) of quantile-type with common noise with the drift function $\bar{b}=\alpha (m^f-s_i)+[a_i-m^f]_+,$ $\sigma_o(t)$ and $\sigma(t)$ are independent of $s_i,m^f.$
 Prosumer $i$ aims to optimize the waste of the remaining energy by selling it in the prosumer market.
The auction ends at time $T.$ The optimal control strategy is trivial in this case and it is given by $a_i^*(t)=m^f(t),$
which is the conditional quantile process. The state dynamics becomes a sort of time-inhomogeneous Ornstein-Uhlenbeck (O-U)  process (see Figure \ref{pros}). 
\begin{figure}[htb]
    \centering
    \includegraphics[width=10cm]{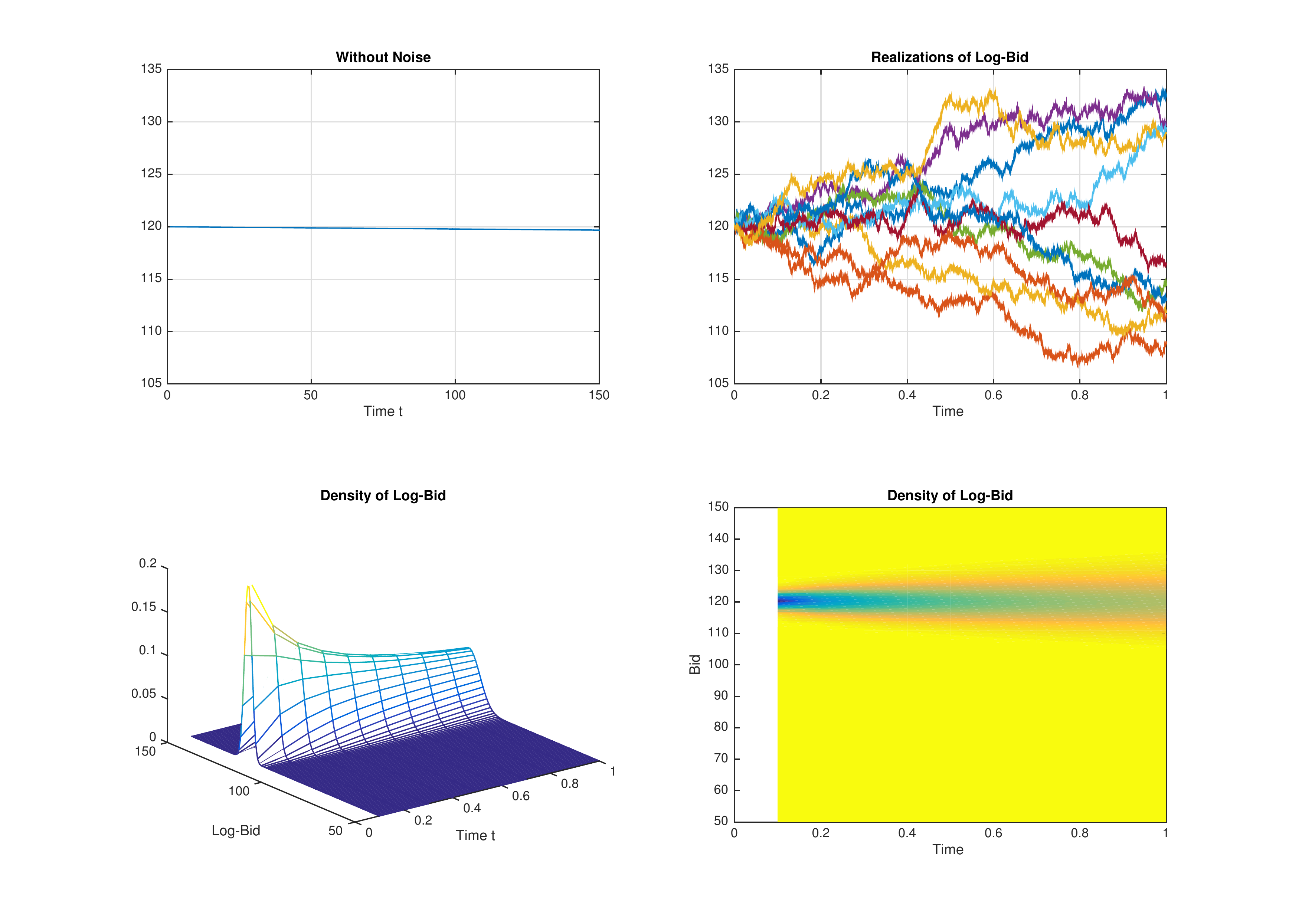}\\ 
    \caption{Samples of Log-bid in the prosumer market.} \label{pros}
\end{figure}
\subsubsection{ Uniqueness of the state }
For any given $L^1$-process $m^f$ the  state dynamics admits a unique solution. Below we provide the solution in closed-form expression.

\subsubsection{ Closed-form expression of the state }
Let $\chi_i$ such that
 $$e^{\alpha t}\chi_i(t)= \alpha \int_0^t e^{\alpha t'}m^f dt'+\int_0^t e^{\alpha t'} \sigma_o dB_o(t')$$
 $$+
 s_{i0}+\int_0^t e^{\alpha t'}\sigma dB_i(t').$$
%

%
%
%
Then, $d\chi_i=-\alpha  \chi_i dt+ \sigma dB_i(t)  +\alpha m^f dt+ \sigma_o dB_o(t).$
It is easy to verify that the above  solution $\chi$  is unique in the sense that if there are two processes $s_i$ and $\chi_i$ solving the state dynamics then $\mathbb{P}(s_i(t)=\chi_i(t),\ a.e. )=1.$

\subsubsection{ Closed-form expression of the optimal strategy }

The equilibrium strategy is the conditional quantile $a_i^*(t)=m^f(t)$ which we explicitly  derive below.
The solution $s_{i}(t)$ is decomposed into individual noise terms and common noise terms as $$s_i(t)=\chi_i(t)=s_{i,c}(t)+s_{i,nc}(t),$$ where
$$
s_{i,c}(t)=e^{-\alpha t}[\alpha \int_0^t e^{\alpha t'}m^f dt'+\int_0^t e^{\alpha t'} \sigma_o dB_o(t')],
$$
 $$ s_{i,nc}(t)=e^{-\alpha t}[ s_{i0}+\int_0^t e^{\alpha t'}\sigma dB_i(t')].$$
 Conditioning on the common noise part we compute
 $$F(s)=P( s_{i}(t)\leq s \ | \ s_{i,c}(t))=$$  $$P(s_{i,nc}(t)\leq s-s_{i,c}(t) | \ s_{i,c}(t))=f.$$
Since $s_{i,nc}(t)$ is a Gaussian process, the inverse of $\Phi(t,.): \ x \mapsto P(s_{i,nc}(t)\leq x)$ is uniquely determined and
$$
s-s_{i,c}(t)=\Phi^{-1}(t,f)=\mu_{i,nc}(t)+\sigma_{nc}(t)Q_Z(f),
$$ with $\mu_{i,nc}(t)=e^{-\alpha t}s_{i0}$ and $\sigma^2_{nc}$ is the variance of $s_{nc}(t).$ 


Thus, $m^f(t)=s_{i,c}(t)+\Phi^{-1}(t,f)=s_{i,c}(t)+m^f_{nc}(t),$ that is, $$
m^f(t)= e^{-\alpha t} [\alpha \int_0^t e^{\alpha t'}m^f dt'+\int_0^t e^{\alpha t'} \sigma_o dB_o(t')]+
m^f_{nc}(t),
$$
where $m^f_{nc}(t)$ is the quantile of the strictly individual noise part, i.e., the quantile of the process $$z_0=e^{-\alpha t}[s_{i0}+\int_0^t e^{\alpha t'}\sigma dB_i(t')].$$  Note that if the initial processes $s_{i0}$ are independent and identically distributed, and independent of $B_i$ then $m^f_{nc}(t)$ the quantile of the process $z_0$ is deterministic.

We compute the deterministic term thanks to Equation (\ref{ramp}). 
\begin{equation}
\left\{\begin{array}{ll}
\frac{m_s(t',m^f)}{m(t',m^f)}=\frac{F_{ss}(t',m^f)}{F_s(t',m^f)}
=\frac{\Phi_{ss}(t',m^f-s_{i,c})}{\Phi_s(t',m^f-s_{i,c})}\\ 
=-\frac{1}{\sigma_{nc}^2}(m^f-s_{i,c}-\mu_{nc})
=-\frac{1}{\sigma_{nc}^2(t)}\sigma_{nc}(t) Q_Z(f)               \\ 
=-\frac{1}{\sigma_{nc}(t)}\ Q_Z(f),    
\end{array}
\right.
\end{equation}
with
\begin{equation}
\left\{\begin{array}{ll} 
\sigma^2_{nc}(t)= var\left[e^{-\alpha t}\int_0^t e^{\alpha t'}\sigma dB_i(t')\right]\\
=\mathbb{E}\left[e^{-\alpha t}\int_0^t e^{\alpha t'}\sigma dB_i(t')\right]^2
=e^{-2\alpha t}\int_0^t e^{2\alpha t'}\sigma^2 dt'.
\end{array}
\right.
\end{equation}
It is easily verified that the variance solves the ordinary differential equation:
$
\alpha \sigma_{nc}+\dot{\sigma}_{nc}=\frac{\sigma^2}{2\sigma_{nc}}.
$
By Proposition 1 we check that
\begin{equation}
\left\{\begin{array}{ll}
dm^f=\sigma_o dB_o-\frac{\sigma^2}{2}\frac{m_s(t,m^f)}{m(t,m^f)} dt\\
= \sigma_o dB_o +\frac{\sigma^2}{2\sigma_{nc}(t)}\   Q_Z(f)dt,\\
= \sigma_o dB_o +(\alpha \sigma_{nc}+\dot{\sigma}_{nc})  Q_Z(f) dt.
\end{array}
\right.
\end{equation}

Using the latter equation we compute the operator cost during peak hours.
The mismatch between the supply $S_o$ of the operator and the demand  $D$ of the consumers creates  the certain cost $$c(T,D(T),S_o(T))=\mathbb{E}[m^f(T) (D(T)-S_o(T))_+]$$ which is obtained from the stochastic differential equation solved by the optimal quantile process.
\section{Conclusion} \label{sec:conclusion}
In this paper we have introduced a new class of mean-field games called quantile-based mean-field games. Due to the inverse nature of the quantile, the existing methodologies previously developed for existence, uniqueness and moment estimates fail, except under very strong assumptions. In this work, a weak solution approach was proposed and a direct method was employed as an illustration. Explicit closed-form expressions are provided a class of mean-field games with $tanh(.)$ as a drift function. However, number of issues remain to be addressed. The term $(\log m)_s$ plays a crucial role in the dynamics is not available in the current formulation. In our future work, we plan to investigate possible  representation of the log-likelihood in terms $m^f$ without using directly $m, m_s.$
\section*{Acknowledgment}
This research work is supported by U.S. Air Force Office of Scientific Research under grant number  FA9550-17-1-0259


%
\section*{Proof of the Quantile Dynamics, Proposition  \ref{propq}}

Consider the implicit equation with stochastic quantities $q$ and $m:$
$$F(q, m)=P(s_i\leq q)-f=\int_{-\infty}^q m(s)ds-f=0.$$
Assuming regularity we have the stochastic integro-PDE solved by $m | B_o:$

\begin{equation} \label{sfpkt1}
\left\{\begin{array}{lll}
dm=\{-(\bar{b}m)_s+ \frac{1}{2}(\sigma^2m)_{ss} +\frac{1}{2}\sigma^2_{o}m_{ss}+J^*[m]\}dt
\\  \ \quad \quad - \sigma_o m_s  dB_o,
\end{array}
\right.
\end{equation}
 It\^o's formula applied to the implicit equation becomes
\begin{equation}
\left\{\begin{array}{ll}
0=dF\\
=F_t dt+F_q dq+F_m. dm \\
+\frac{1}{2}[ F_{qq} d\langle q \rangle_t + (F_{qm}+F_{mq}) d\langle q,m \rangle_t+F_{mm}  d\langle  m \rangle_t],
\end{array}
\right.
\end{equation}

Note that  $F_q=m(t,q)$ which is stochastic quantity. 

The main difficulty here is a theory of differentiation with respect to $m.$ We can use different notions such as Wasserstein gradient, Fr\'echet derivative, functional derivative, G\^ateaux  derivative. However, here, the derivative we will need involve $m(t,q)$ which is the conditional density evaluated to the quantile, which is obviously a degenerate quantity. 
In order the make the differentiation part self-content, we  use a mollifier approximation of the conditional distribution of $s_i(t)\  | \ \mathcal{F}_t^{B_o,m}.$
$$
m(t,dy)=\lim_{(n,\epsilon) \rightarrow (+\infty, 0)}\ \frac{1}{n}\sum_{j=1}^n \rho_{\epsilon}(y-s_i),
$$
where $\rho_{\epsilon}(x)=\frac{1}{\epsilon^d}\rho(\frac{x}{\epsilon}),$\  $\int_{x\in \mathbb{R}} \rho(x) dx=1.$

The implicit equation, by renaming $F[{n,\epsilon}]=:F$ becomes

$$F(q, s)=\int_{-\infty}^q \frac{1}{n}\sum_{j=1}^n \rho_{\epsilon}(y-s_i)dy -f=0.$$

Note that  $F_q(q,s_1,\ldots, s_n)=\frac{1}{n}\sum_{j=1}^n \rho_{\epsilon}(q-s_i)\neq 0$ which is non-zero stochastic quantity. Since $F_q\neq 0,$ one can use the implicit function theorem and it provides the existence of a local function $q=q(f,s_1,\ldots, s_n).$  
\begin{equation}
\left\{\begin{array}{ll}
0=dF\\
=F_t dt+F_q dq+F_s. ds \\
+\frac{1}{2}\{ F_{qq} d\langle q \rangle_t \\ + \sum_{i=1}^n (F_{qs_i}+F_{s_iq}) d\langle q,s_i \rangle_t+F_{s_is_i}  d\langle  s_i \rangle_t\}\\
\end{array}
\right.
\end{equation}
We compute the first and second derivative terms that are needed in the It\^o's formula:
\begin{equation}
\left\{\begin{array}{ll}
F_q=\frac{1}{n}\sum_{i=1}^n \rho_{\epsilon}(q-s_i),\\
F_{qq}= \frac{1}{n}\sum_{i=1}^n \rho'_{\epsilon}(q-s_i),\\
F_{s_iq}=- \frac{1}{n} \rho'_{\epsilon}(q-s_i)=F_{qs_i},\\
F_{s_i}=\frac{1}{n} \int_{-\infty}^q \rho'_{\epsilon}(y-s_i)dy=-\frac{1}{n}\rho_{\epsilon}(q-s_i),\\
F_{s_is_i}=  \frac{1}{n} \int_{-\infty}^q \rho''_{\epsilon}(y-s_i)dy=\frac{1}{n}\rho'_{\epsilon}(q-s_i),
\end{array}
\right.
\end{equation}

Using the forward SDE solved by $s$ we compute the following terms:
\begin{equation}
\left\{\begin{array}{ll}
F_s. ds=\sum_{i=1}^n F_{s_i} ds_i\\
=\sum_{i=1}^n F_{s_i} [ \bar{b}dt+\sigma dB_i+\sigma_o dB_o+\int_{\Theta}{\gamma} \tilde{N}_i(dt,d\theta)]\\
= -\sum_{i=1}^n \frac{1}{n}\rho_{\epsilon}(q-s_i)[ \bar{b}dt+\sigma dB_i+\sigma_o dB_o+\int_{\Theta}{\gamma} \tilde{N}_i(dt,d\theta)]
\end{array}
\right.
\end{equation}

It follows that
\begin{equation}
\left\{\begin{array}{ll}
-F_q dq=\sum_{i=1}^n F_{s_i} [ \bar{b}dt+\sigma dB_i+\sigma_o dB_o+\int_{\Theta}{\gamma} \tilde{N}_i(dt,d\theta)] \\
+\frac{1}{2}\{ F_{qq} d\langle q \rangle_t \\ + \sum_{i=1}^n (F_{qs_i}+F_{s_iq}) d\langle q,s_i \rangle_t+F_{s_is_i}  d\langle  s_i \rangle_t\}\\
\end{array}
\right.
\end{equation}
By identification of the diffusion terms we obtain

\begin{equation}
\left\{\begin{array}{ll}
dq=- \sum_{i=1}^n \left(  \frac{\sigma}{F_q} F_{s_i}\right)dB_i-\left( \frac{ \sum_{i=1}^n \sigma_oF_{s_i}}{F_q}\right) dB_o\\
- \left(\frac{1}{F_q}\sum_{i=1}^n \bar{b} F_{s_i} \right) dt\\
- \left(\frac{1}{F_q}\sum_{i=1}^n \int_{\Theta}{\gamma} \tilde{N}_i(dt,d\theta)F_{s_i} \right) \\
-\frac{1}{2F_q}\{ F_{qq} d\langle q \rangle_t + \\ \sum_{i=1}^n 2F_{qs_i} d\langle q,s_i \rangle_t+F_{s_is_i}  d\langle  s_i \rangle_t\}
\end{array}
\right.
\end{equation}

Using the relation $\sum_{i} F_{s_i}=-F_q,$ and taking the limit we obtain

\begin{equation} \label{ertyu}
\left\{\begin{array}{ll}
dq=\bar{b}(q,q,a(q,.)) dt+\sigma_o(q,q,a(q,.)) dB_o\\
- \lim\ \sum_{i=1}^n \left(  \frac{\sigma}{F_q} F_{s_i}\right)dB_i\\
-  \lim\ \sum_{i=1}^n \int_{\Theta}\frac{\gamma F_{s_i}}{F_q} \tilde{N}_i(dt,d\theta) \\
- \lim\  \frac{1}{2F_q}\{ F_{qq} d\langle q \rangle_t + \\ \sum_{i=1}^n 2F_{qs_i} d\langle q,s_i \rangle_t+F_{s_is_i}  d\langle  s_i \rangle_t\}
\end{array}
\right.
\end{equation}
We have
$\lim_{n,\epsilon} F_q=\lim \frac{1}{n}\sum_{j=1}^n \rho_{\epsilon}(q-s_i)=\lim \int \rho_{\epsilon}(q-s)m(x)dx=m(q).$

We now compute the quadratic variation terms

\begin{equation}
\left\{\begin{array}{ll}
d\langle q \rangle_t =[(\frac{\sum_j \sigma_o(s_j,.) F_{s_j}}{F_q})^2+ \sum_{i=1}^n \left(  \frac{\sigma(s_i,.)}{F_q} F_{s_i}\right)^2] dt\\
+ \sum_{i=1}^n \left(  \int_{\Theta}\frac{\gamma(s_i,.)\mu(d\theta)}{F_q} F_{s_i}\right)^2] dt
,\\
d\langle q,s_i \rangle_t=[ \sigma_o(s_i,.)\left( \frac{ \sum_{j=1}^n \sigma_o(s_j,.)F_{s_j}}{F_q}\right)-  \frac{\sigma^2}{F_q} F_{s_i}]dt\\
 -  [\int_{\Theta}\frac{\gamma^2 \mu(d\theta)}{F_q} F_{s_i}]dt,
\\
d\langle  s_i \rangle_t=[\sigma_o^2(s_i,.)+\sigma^2(s_i,.)+\int_{\Theta}\gamma^2(s_i,.)\mu(d\theta)]dt.
\end{array}
\right.
\end{equation}
The limit of the second derivative $F_{qq}$ is
$\lim F_{qq}=\lim \frac{1}{n}\sum_{j=1}^n \rho'_{\epsilon}(q-s_i)=\lim \int \rho'_{\epsilon}(q-s)m(s)ds=\lim \int \rho_{\epsilon}(q-s)m_s(s)ds,$
where we have used an integration by part.
\begin{equation}
\left\{\begin{array}{ll}
-\frac{F_{qq}}{2F_q}d\langle q \rangle_t=  - \frac{F_{qq}}{2F_q}\{  (\sum_{j=1}^n \frac{\sigma_oF_{s_j}}{F_q})^2+\sum_{i}\sigma^2 \frac{F_{s_i}^2}{F_q^2} \}\\
\quad \quad - \frac{F_{qq}}{2F_q}\sum_{i}[\int_{\Theta}\frac{\gamma^2F_{s_i}}{F_q}\mu(d\theta)]^2\\
\mbox{which goes to: } \\
-\frac{m_s(q)}{2m(q)} \sigma_o^2 (q,.) 
\end{array}
\right.
\end{equation}
because  $$\lim\ \sum_{i}\sigma^2 F_{s_i}^2 =0,$$   $$ \lim_{n,\epsilon}\ \sum_{i}[\int_{\Theta}\frac{\gamma^2F_{s_i}}{F_q}\mu(d\theta)]^2=0.$$

\begin{equation}
\left\{\begin{array}{ll}
-\sum_i\frac{F_{qs_i}}{F_q}d\langle q, s_i \rangle_t=  - (\sum_i \frac{\sigma_o F_{s_i}}{F_q})(  \sum_j \sigma_o\frac{F_{qs_j}}{F_q})\\
+\sum_{i=1}^n \sigma^2\frac{F_{qs_i}}{F_q}.\frac{F_{s_i}}{F_q}\\
+ \sum_{i=1}^n \frac{F_{qs_i}}{F_q}.\ [\int_{\Theta}\frac{\gamma^2 \mu(d\theta)}{F_q} F_{s_i}]
\\
\mbox{which goes to: } \\
\frac{\sigma_o(q,.)}{m(q)} (\sigma_om)_s(q) ,
\end{array}
\right.
\end{equation}
as the terms $\sum_{i=1}^n \sigma^2\frac{F_{qs_i}}{F_q}.\frac{F_{s_i}}{F_q}$  and  $\sum_{i=1}^n \frac{F_{qs_i}}{F_q}.\ [\int_{\Theta}\frac{\gamma^2 \mu(d\theta)}{F_q} F_{s_i}]$ vanish.

\begin{equation}
\left\{\begin{array}{ll}
- \sum_i\frac{F_{s_is_i}}{2F_q}d\langle s_i \rangle_t=  - \sum_i\frac{F_{s_is_i} (\sigma_o^2+\sigma^2+\int_{\Theta}\gamma^2(s_i,.)\mu(d\theta))}{2F_q} \\
\mbox{which goes to: } \\
-\frac{1}{2m(q)} [m (\sigma_o^2+\sigma^2+\int_{\Theta}\gamma^2(s,.)\mu(d\theta))]_s(q)
\end{array}
\right.
\end{equation}
Summing the latter terms, we obtain

\begin{equation}
\left\{\begin{array}{ll}
-\lim_{n,\epsilon}\ \frac{F_{qq}}{2F_q}d\langle q \rangle_t
-\lim_{n,\epsilon}\ \sum_i\frac{F_{qs_i}}{F_q}d\langle q, s_i \rangle_t
-\lim_{n,\epsilon}\ \sum_i\frac{F_{s_is_i}}{2F_q}d\langle s_i \rangle_t\\
= -\frac{1}{2m(q)}\{  \sigma_o^2 (q,.)m_s(q)-2 \sigma_o(q,.)  (\sigma_om)_s(q)\\ +  (m \sigma_o^2)_s+ (m \sigma^2)_s +[m\int_{\Theta}\gamma^2(s,.)\mu(d\theta))]_s\}dt\\
=-\frac{1}{2m(q)}\{ \sigma_o^2m_s-2 \sigma_o\sigma_{o,s} m-2 \sigma_o^2m_s\\ +(m \sigma_o^2)_s+ (m \sigma^2)_s +[m\int_{\Theta}\gamma^2(s,.)\mu(d\theta))]_s\}dt\\
=-\frac{1}{2m(q)}\{ -\sigma_o^2m_s-2 \sigma_o\sigma_{o,s} m +(m \sigma_o^2)_s+[m\int_{\Theta}\gamma^2(s,.)\mu(d\theta))]_s\}dt\\
-\frac{1}{2m(q)} (m \sigma^2)_s dt\\
=-\frac{1}{2m(q)} (m\sigma^2)_s (q)dt 
-\frac{1}{2m(q)}[m\int_{\Theta}\gamma^2(s,.)\mu(d\theta))]_s dt
\end{array}
\right.
\end{equation}

Equation (\ref{ertyu}) becomes
\begin{equation}
\left\{\begin{array}{ll}
dq=\bar{b}(q,q,a(q,.)) dt+\sigma_o(q,q,a(q,.)) dB_o\\
-\frac{1}{2m(q)} (m \sigma^2)_s(q) dt -\frac{1}{2m(q)}[m\int_{\Theta}\gamma^2\mu(d\theta))]_s dt\\
=\bar{b}(q,q,a(q,.)) dt+\sigma_o(q,q,a(q,.)) dB_o\\
-\sigma(q,.)\sigma_s(q,.) dt- \frac{\sigma^2}{2} \frac{m_s(q)}{m(q)} dt \\
-\int \gamma(q,.)\gamma_s(q,.)\mu(d\theta) dt- \frac{\int_{\Theta}\gamma^2\mu(d\theta)}{2} \frac{m_s(q)}{m(q)} dt 

\end{array}
\right.
\end{equation}
This completes the proof.

\section*{Author Information}

{\it Hamidou Tembine} (S'06-M'10-SM'13) received the M.S. degree in Applied Mathematics from Ecole Polytechnique in 2006 and the Ph.D. degree in Computer Science  from University of Avignon in 2009. His current research interests include evolutionary games, mean field stochastic games and applications.
In December 2014, Tembine received the IEEE ComSoc Outstanding Young Researcher Award for his promising research activities for the benefit of the society. He was the recipient of 7 best article awards in the applications of game theory. Tembine is a prolific researcher and holds several  scientific publications including magazines, letters, journals and conferences. He is author of the book on "distributed strategic learning for engineers" (published by CRC Press, Taylor \& Francis 2012), and co-author of the book "Game Theory and Learning in Wireless Networks" (Elsevier Academic Press). Tembine has been co-organizer of several scientific meetings on game theory in networking, wireless communications and smart energy systems.  He is recipient of the Next Einstein Fellow award  2017.  He is a senior member of IEEE.


\end{document}